\newcommand{\R}{{\mathbb R}}
\newcommand{\ie}{\emph{i.e.}}
\newtheorem{theorem}{Theorem}[section]
\newtheorem{lemma}[theorem]{Lemma}
\theoremstyle{definition}
\newtheorem{remark}[theorem]{Remark}
\title{Nonlinear stability at the Eckhaus boundary}
\author{Julien Guillod$^1$,  Guido Schneider$^2$,\\ Peter Wittwer$^3$, Dominik Zimmermann$^2$
\\[3mm]
{\small $^{1}$  LJLL,
Sorbonne Universit\'e,
4 Place Jussieu, F-75005 Paris, France}\\
{\small $^{2}$ IADM,
Universit\"{a}t Stuttgart, Pfaffenwaldring 57, D-70569 Stuttgart, Germany }
\\
{\small $^{3}$ D\'epartement de Physique Th\'eorique,
Universit\'{e} de Gen\`{e}ve,} \\[-1mm] {\small
24 quai Ernest-Ansermet,
CH-1211 Gen\`{e}ve 4,
Switzerland}
}
\date{March 9, 2018}
\begin{document}
\maketitle

\begin{abstract}
The real Ginzburg-Landau equation possesses a family of spatially periodic equilibria.
If the wave number of an equilibrium is strictly below the 
so called Eckhaus boundary the equilibrium is  known 
to be spectrally and diffusively stable, \ie, stable w.r.t.~small spatially localized 
perturbations. If the wave number is above the  Eckhaus boundary 
the equilibrium is unstable. Exactly at the boundary  spectral stability holds.
The purpose of the present paper is to establish the diffusive stability of these equilibria. 
The limit profile is determined by 
a nonlinear equation since a nonlinear term turns out to be marginal
w.r.t.~the linearized dynamics.
\end{abstract}

\section{Introduction}

The Ginzburg-Landau equation
\begin{equation} \label{rGL}
\partial_T A = \partial^2_X A + A - A \left| A \right|^2,
\end{equation}
with $ T \geq 0 $, $ X \in \mathbb{R} $, and $ A(X,T) \in \mathbb{C} $ 
appears as a universal amplitude equation for the description of a number of pattern forming systems close to the first instability, cf. \cite{NW69}. See \cite{SZ13,SUbook} for a recent 
overview about the  mathematical justification of the so called Ginzburg-Landau approximation.
The  stationary solutions of \eqref{rGL}, namely
\begin{equation} \label{vms1}
A_q = \sqrt{1-q^2} e^{iqX} 
\end{equation}
are known  to be spectrally 
stable for $q^2 \leq 1/3$ and 
unstable for $q^2 > 1/3$. This was observed first  in \cite{Eckhaus65book}
and therefore $q^2 = 1/3$ is called the 
Eckhaus  or sideband stability boundary.  

\begin{figure}[htbp]
   \centering
   \includegraphics[width=2in]{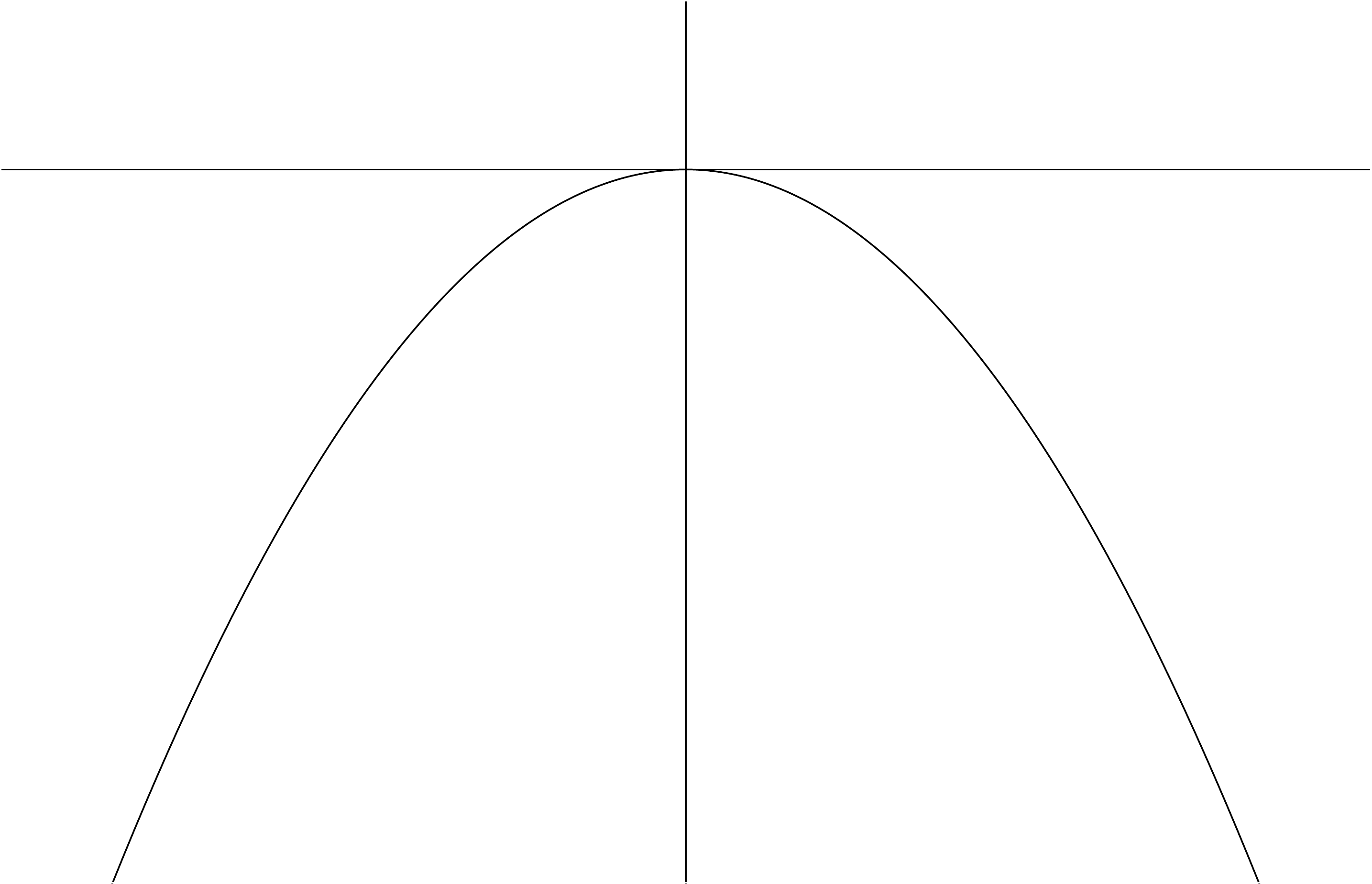} \qquad
   \includegraphics[width=2in]{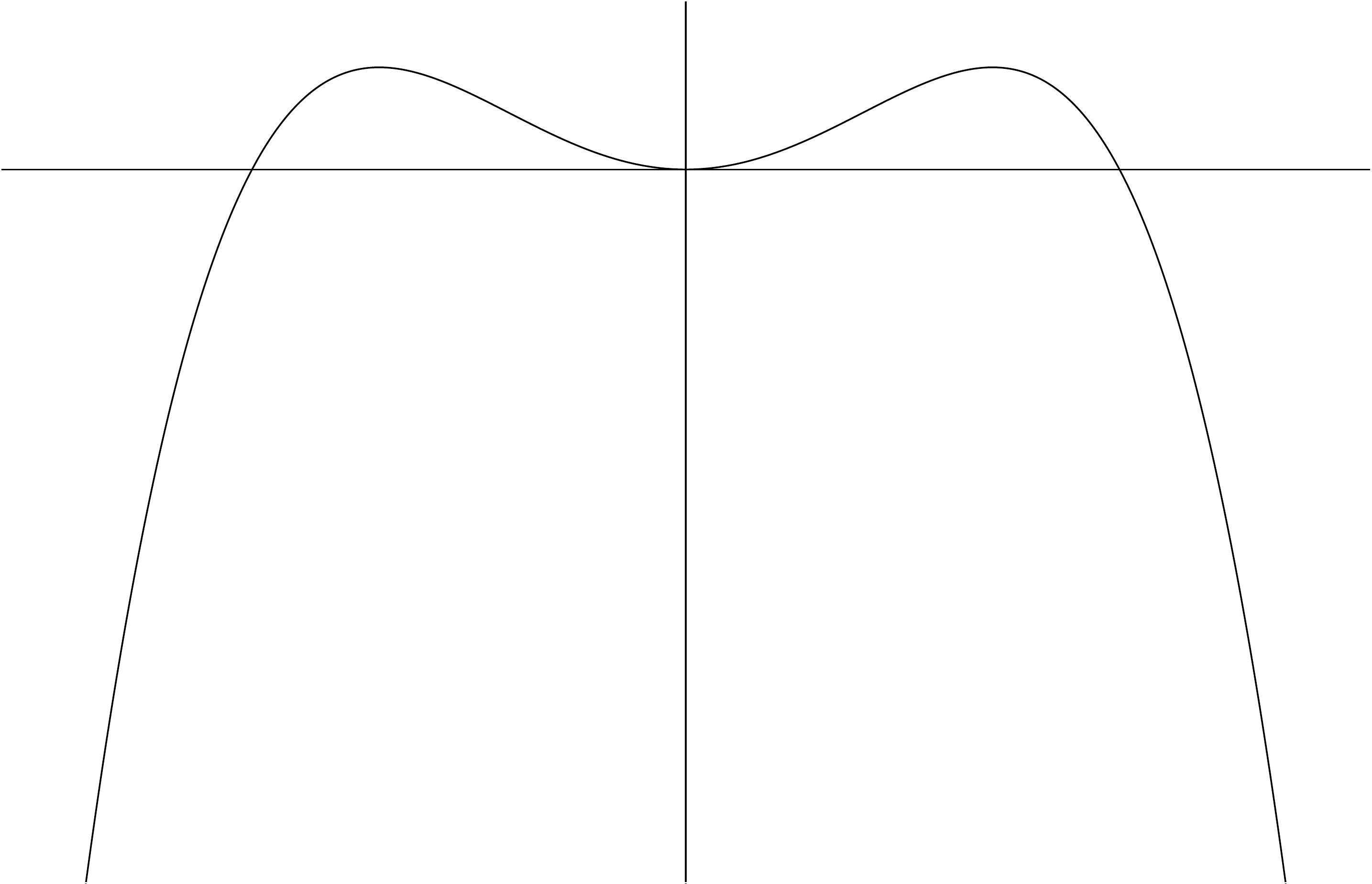} 
\setlength{\unitlength}{.4in}
\vspace*{-4cm}

\begin{picture}(7,5)(0,0)
\put(0.5,3.98){$\lambda_1$}
\put(6.5,3.98){$\lambda_1$}
\put(2.4,3){$k $}
\put(8.4,3){$k $}
\end{picture}
   \caption{The linearization around the equilibrium is solved by $ e^{ikx+\lambda_{1,2} t} V_{1,2} $ with $ V_{1,2}  \in \mathbb{C}^2 $ and  $ \lambda_1(k) > \lambda_2(k) $. The left panel 
   shows the curve $ k \mapsto \lambda_1(k) $ in the stable case and the right panel 
   in the unstable case.}
   \label{figure1}
\end{figure}

It took more than twenty years to establish the diffusive stability of  the
spectrally stable equilibria, \ie, the stability 
w.r.t.~small spatially localized 
perturbations. In \cite{CEE92} this result has been shown by using  $ L^1 $-$ L^{\infty} $ estimates 
and in \cite{BK92} 
by using a renormalization group approach 
to additionally establish the exact asymptotic decay of the perturbation in time. 
The proofs are based on the fact that 
the nonlinear terms are irrelevant w.r.t.~the linear diffusion
$$  
\varphi(X,T) = \frac{\varphi^*}{\sqrt{T}}e^{- \frac{X^2}{4T}} + \mathcal{O}\!\left(\frac{1}{T}\right),
$$
and so 
the renormalized perturbation converges  towards a Gaussian limit. 

In contrast to exponential decay rates, polynomial decay rates occurring in diffusion 
do not allow in general to control all nonlinear terms in a neighborhood of the origin. 
The nonlinear terms can be divided 
into irrelevant ones  which show faster decay rates than the  linear diffusion terms
$ \partial_T \varphi $ and $ \partial_X^2  \varphi $, into marginal ones which show the same decay 
rates and into the ones which decay slower and which would lead to a completely different asymptotic 
behavior for $ T \to \infty $. Linear diffusive  behavior exhibits  the following asymptotic
decay rates 
$$
\varphi \sim T^{-1/2}, \quad \partial_X
\sim T^{-1/2}, \quad \text{and} \quad \partial_T \sim T^{-1}
$$ 
and so in a nonlinear diffusion equation 
$$ 
\partial_T \varphi - \partial_X^2  \varphi = \varphi^{p_0}  (\partial_X  \varphi)^{p_1} (\partial_X^2  \varphi)^{p_2}
$$ 
the terms on the left hand side both exhibit a decay rate  $ T^{-3/2} $, whereas the right hand side 
decays as $ T^{-(p_0 + 2 p_1 + 3 p_2)/2} $. More precisely, a term $ \varphi^{2} $ cannot be controlled by diffusion,
a term $ - \varphi^3 $ leads to a faster decay,  a term $ + \varphi^3 $  to a logarithmic growth, and a Burgers term 
$ \varphi \partial_X \varphi $ is not changing the decay rates, but the limit profile from a  Gaussian into 
a perturbed Gaussian. All other terms, satisfying $ p_0 + 2 p_1 + 3 p_2 \geq 4 $, can be controlled asymptotically by 
the left hand side.
In order to  prove that a smooth nonlinearity $ H = H(\varphi, \partial_X  \varphi,\partial_X^2  \varphi) $ can be controlled 
by diffusion we have to show that the coefficients in front of $ \varphi^2 $ and $ \varphi^3 $ vanish.
This idea has been generalized to very general systems where $ \partial_X^2 $  has been replaced by 
operators which possess a curve of eigenvalues with 
a parabolic profile for $ k \to 0 $ in Fourier or Bloch space. 
In many such systems the 
nonlinear terms turn out be irrelevant 
\cite{Schn98ARMA,DSSS,SSSU,zumbrum}.

Exactly at the Eckhaus stability boundary, $ q^2 = 1/3 $, spectral stability still holds, but 
only with $ \lambda_1 \sim - k^4 $ instead of 
$ \lambda_1 \sim - k^2 $
for $ k \to 0 $ as shown in Figure \ref{figure1}.
Therefore, we only have  the much slower asymptotic decay rates 
$$
\varphi \sim T^{-1/4}, \quad \partial_X
\sim T^{-1/4}, \quad \text{and} \quad \partial_T \sim T^{-1}.
$$ 
Due to this slow decay there is a  nonlinear term which is marginal w.r.t.~the linear  dynamics.
We find an effective equation of the form
\begin{equation} \label{show}
\partial_T \varphi  + \nu_1 \partial^4_X \varphi = \nu_2  \partial_X \left(
  (\partial_X \varphi)^2\right) + g_1,
\end{equation}
with coefficients $ \nu_1 > 0 $ and $ \nu_2 < 0 $. 
The first term on the right hand side decays as  $ \sim T^{-5/4} $ like the linear ones on the left hand side.
In $ g_1 $ we collect all terms with faster decay rates. 
Fortunately, it turns out that $ \nu_2  \partial_X \left(
  (\partial_X \varphi)^2\right) $ is not changing the decay rates, but only leads to 
a  nonlinear correction of the limit profile like the Burgers term $ \varphi \partial_X \varphi $  for diffusion \cite{BKL94}.
Our result is therefore as follows.
\begin{theorem} \label{yuri1}
For all $C > 0$, there exists $\delta > 0$ such that for any
$ \widehat{V}_0 \in L^1 \cap L^\infty$ satisfying
$\|\widehat{V}_0\|_{L^1}+\|\widehat{V}_0\|_{L^{\infty}} \leq \delta $,
the solution $A = A_{\sqrt{1/3}} + V $ of the Ginzburg-Landau equation \eqref{rGL} with $ V|_{T=0} = V_0 $ satisfies
 $$\|\widehat{V}(T)\|_{L^{\infty}} \le C \qquad   \text{and}  \qquad   \|V(T)\|_{L^{\infty}} \leq
\|\widehat{V}(T)\|_{L^1} \leq
    C (1{+}T)^{-\frac{1}{4}}$$
for all $T \ge 0$.
\end{theorem}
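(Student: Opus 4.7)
The first step is to remove the periodic factor from the base state: writing $A = A_{q_c}(1+B)$ with $q_c = 1/\sqrt{3}$, the oscillatory $e^{iq_cX}$ is absorbed and the equation for the complex-valued perturbation $B$ has constant coefficients, so ordinary Fourier analysis applies (and $\widehat{V}$ is a rigid translate of $\widehat{B}$ with the same $L^p$-norms). Decomposing $B=u+iv$, the linearization has the $2\times 2$ symbol
\[
\widehat{L}(k) = \begin{pmatrix} -k^2-4/3 & -2iq_ck \\ 2iq_ck & -k^2 \end{pmatrix},
\]
whose eigenvalues on the critical branch satisfy $\lambda_1(k)=-\tfrac{3}{4}k^4+\mathcal{O}(k^6)$ while the other branch $\lambda_2(k)=-\tfrac{4}{3}+\mathcal{O}(k^2)$ is exponentially damped. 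Projecting the nonlinear dynamics onto these spectral subspaces and slaving the damped coordinate to the critical mode $\varphi$ produces the effective equation \eqref{show}: the only marginal nonlinearity is $\nu_2\partial_X((\partial_X\varphi)^2)$ (with $\nu_1>0$ and $\nu_2<0$), and $g_1$ collects the slaved-mode remainder together with cubic and higher contributions that all decay strictly faster than $T^{-5/4}$.

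Working in Fourier, I use the bootstrap norm
\[
\|V\|_T := \sup_{0\le s\le T}\Bigl(\|\widehat{V}(s)\|_{L^\infty_k}+(1+s)^{1/4}\|\widehat{V}(s)\|_{L^1_k}\Bigr),
\]
which encodes the two conclusions of Theorem~\ref{yuri1}. The semigroup symbol satisfies $|\widehat{S}(T,k)|\lesssim e^{-ck^4T}+e^{-cT}$, giving $\|\widehat{S}(T)\|_{L^\infty_k}\lesssim 1$, $\|\widehat{S}(T)\|_{L^1_k}\lesssim(1+T)^{-1/4}$, and for the derivative-weighted multipliers $\|ik\,\widehat{S}(T)\|_{L^\infty_k}\lesssim(1+T)^{-1/4}$, $\|ik\,\widehat{S}(T)\|_{L^1_k}\lesssim(1+T)^{-1/2}$. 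The linear contribution $\widehat{S}(T)\widehat{V}_0$ to Duhamel satisfies both target bounds by the smallness of $\|\widehat{V}_0\|_{L^1\cap L^\infty}$. For the nonlinear piece every source term is in conservation form, so the outer $\partial_X$ is pulled into $\widehat{S}$; combining the multiplier bounds above with Young's inequality applied to $\widehat{\partial_X\varphi}*\widehat{\partial_X\varphi}$ — using $\|\widehat{\partial_X\varphi}(s)\|_{L^1_k}\lesssim(1+s)^{-1/2}\|V\|_s$ and $\|\widehat{\partial_X\varphi}(s)\|_{L^\infty_k}\lesssim(1+s)^{-1/4}\|V\|_s$, both obtained by reinserting Duhamel — closes both inequalities defining $\|V\|_T$. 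The contributions from $g_1$ are handled analogously with strictly faster time decay, and a standard contraction argument on a small ball in $\|\cdot\|_T$ yields the global solution.

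The genuine difficulty is the marginal term $\nu_2\partial_X((\partial_X\varphi)^2)$. A naive count ignoring the derivative structure gives $\int_0^T(T-s)^{-1/4}(1+s)^{-1/4}\,ds\sim(1+T)^{1/2}$, which destroys the bootstrap. Saving the argument requires exploiting simultaneously: (i) the conservation form, absorbing the outer $\partial_X$ into the semigroup and replacing the kernel by $ik\,\widehat{S}$ with $L^1_k$-norm $(T-s)^{-1/2}$; and (ii) the fact that the two inner factors both carry a derivative, which improves the Young estimate for the quadratic source to $\|\widehat{(\partial_X\varphi)^2}(s)\|_{L^\infty_k}\lesssim(1+s)^{-3/4}$. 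Plugging both improvements into Duhamel yields $\int_0^T(T-s)^{-1/2}(1+s)^{-3/4}\,ds\lesssim(1+T)^{-1/4}$ after splitting at $s=T/2$, exactly the decay demanded by Theorem~\ref{yuri1}. This mirrors the classical mechanism of \cite{BKL94}: a Burgers-type marginal term is non-destabilising, preserving the linear decay rate while only deforming the asymptotic limit profile.
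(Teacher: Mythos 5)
Your overall scheme coincides with the paper's: multiplicative ansatz, Fourier-space $L^1$--$L^\infty$ estimates, splitting into a critical and an exponentially damped branch, slaving of the damped branch, and the treatment of the marginal term by pulling the outer derivative into the semigroup while letting both inner factors carry a derivative, which gives $(1+T)^{1/4}\int_0^T(T-s)^{-1/2}(1+s)^{-3/4}\,\mathrm{d}s\lesssim 1$ after splitting at $s=T/2$ --- this is exactly the paper's handling of $M_2$. As written, however, the argument does not close, for two concrete reasons.

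First, your bootstrap norm $\|V\|_T$ contains only the unweighted $L^\infty_k$- and $L^1_k$-norms, whereas your estimate of the marginal term relies on $\|k\widehat{\varphi}(s)\|_{L^1}\lesssim(1+s)^{-1/2}$ and $\|k\widehat{\varphi}(s)\|_{L^\infty}\lesssim(1+s)^{-1/4}$, which you claim follow, linearly in $\|V\|_s$, ``by reinserting Duhamel''. This is circular: the reinserted Duhamel formula again contains the marginal term, whose bound is quadratic in precisely these derivative-weighted quantities, so they cannot be deduced from $\|V\|_s$ alone; they must be carried as additional components of the bootstrap quantity. This is exactly why the paper works with the family $a_{c,\nu}(T)$, $b_{c,\nu}(T)$ for $\nu\in\{0,1,2,3,\nu^*\}$, $\nu^*<4$, and closes one coupled system of inequalities $R(T)\leq R(0)+f(R(T))$ at the end of Section \ref{sec7}.

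Second, the reduction to the effective equation \eqref{show} with a remainder $g_1$ ``decaying strictly faster than $T^{-5/4}$'' is asserted, but it is where most of the work lies. The slaving is a genuine change of variables, $\widehat{v}_s=\widehat{v}_s^*(\widehat{w}_c)+\widehat{w}_s$ as in \eqref{mice56}--\eqref{coo22}: the residual damped variable $\widehat{w}_s$ does not disappear, and its equation contains $\widehat{v}_s^{*\prime}(\widehat{w}_c)\,\partial_T\widehat{w}_c$, so proving its fast decay requires bounds on $\partial_T\widehat{w}_c$, hence on $\||k|^4\widehat{w}_c\|$ --- the second reason weighted norms of order $\nu^*$ arbitrarily close to $4$ (together with the compact support of $\widehat{w}_c$, so that $|k|^4\lesssim|k|^{\nu^*}$ there) are indispensable, and your proposal never estimates this damped remainder at all. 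Moreover, after the transformation the critical equation still contains quadratic terms besides the marginal one; they are irrelevant only because their kernels vanish to fourth order at $k=0$ (Lemma \ref{lem33}, verified in Appendix \ref{appB}), a cancellation that has to be checked and that your classification ``cubic and higher contributions'' passes over --- a generic quadratic term would decay only like $T^{-1/2}$ and destroy the scheme. Filling these two gaps essentially reproduces Sections \ref{secneu3}--\ref{sec7} of the paper.
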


The proof is an adaption of the $ L^1 $-$ L^{\infty} $ scheme presented in \cite{MSU01}
to the situation of a coupling of linearly diffusive modes with linearly exponentially damped modes. 
The complications are  due to the marginal relevant nonlinear term and the slower decay rates.
We strongly believe that our result  can be transferred to general pattern forming systems, too.

The plan of the paper is as follows. 
Section \ref{sec2} recalls the formal calculations to derive \eqref{show}.
This section is not necessary for the proof of Theorem \ref{yuri1} 
but helps to understand the subsequent steps of the proof.  
The proof of Theorem \ref{yuri1} starts 
in Section \ref{secneu3} with the separation of  the linearly diffusive and the linearly exponentially damped  modes in a suitably chosen coordinate system.
In Section \ref{sec5} we  establish  the linear  decay estimates.
The formal irrelevance of the nonlinear terms can be found  in Section \ref{sec6} and 
in Appendix \ref{appB}.
The final nonlinear decay estimates can be found in Section \ref{sec7}.
For completeness the limit profile of the renormalized solution is computed in Appendix \ref{appA}.

For some of the following explicit calculations the software Mathematica \cite{mathematica} was used.

\medskip

\noindent
{\bf Notation.}
We define the Fourier transform by 
$$ 
\widehat{u}(k) = (\mathcal{F} u)(k) = \frac{1}{2 \pi} \int_{\R} u(x) e^{-ikx} \mathrm{d}x
$$ 
and  the inverse  Fourier transform by 
$$ 
u(x)  = (\mathcal{F}^{-1} \widehat{u})(x) =  \int_{\R} \widehat{u}(k)  e^{ikx} \mathrm{d}k.
$$ 
We have $ \| u \|_{\infty} \leq \| \widehat{u} \|_{1} $, where 
$ \| u \|_{\infty} = \sup_{x \in \R} | u(x) | $ is the norm in the space of bounded uniformly continuous 
functions and  $ \| u \|_{1}  = \int_{\R} |u(x) | \mathrm{d}x $ the norm in the Lebesgue space $ L^1 $.

\section{Some formal calculations}

\label{sec2}

In this section we formally derive \eqref{show}.
This section is not necessary for the proof of Theorem \ref{yuri1} 
but helps to understand the subsequent steps of the proof. 

\subsection{Equations for the deviation}

In order to obtain a semilinear system with $ X $-independent coefficients we introduce the deviation $ V $ from 
$A_q$ not in an additive, but in a multiplicative way, \ie, we set 
\begin{equation} \label{Katsun}
A(X,T)=A_{\sqrt{1/3}}(X)(1+V(X,T)) = \sqrt{\textstyle \frac{2}{3}} e^{i \sqrt{1/3} X} (1+V(X,T)).
\end{equation}
With
\begin{align*}
\partial_XA	&=A_{\sqrt{1/3}}\partial_X V+ i\sqrt{\tfrac{1}{3}}A_{\sqrt{1/3}}+i\sqrt{\tfrac{1}{3}}A_{\sqrt{1/3}}V,\\
\partial_X^2A	&=A_{\sqrt{1/3}}\partial_X^2V+2i\sqrt{\tfrac{1}{3}}\partial_XV-\tfrac{1}{3}A_{\sqrt{1/3}}-\tfrac{1}{3}A_{\sqrt{1/3}}V
\end{align*}
we find
\[
\partial_TV=\partial_X^2V+2i\sqrt{\tfrac{1}{3}}\partial_XV-\tfrac{2}{3}V-\tfrac{2}{3}\overline{V}-\tfrac{2}{3}V^2-\tfrac{4}{3}|V|^2-\tfrac{2}{3}V|V|^2.
\]
Now we split the above equation into real and imaginary part. We introduce  $V_r=\mathrm{Re}\,V$, $V_i=\mathrm{Im}\,V$ and  obtain
\begin{equation}\label{vrvi}
\begin{array}{rcl}
\partial_T V_r & = & \partial_X^2 V_r -\tfrac{4}{3} V_r -2\sqrt{\tfrac{1}{3}}\partial_X V_i -\frac{2}{3}(3 V_r^2 + V_i^2+V_r^3+V_rV_i^2) , \\
\partial_T V_i & = &  \partial_X^2 V_i+ 2\sqrt{\tfrac{1}{3}} \partial_X V_r -\frac{2}{3}(2V_rV_i+V_r^2V_i+V_i^3).
\end{array}
\end{equation}

\subsection{Spectral analysis}

The linearization around $ (V_r,V_i) =(0,0) $ is given by 
\begin{align*}
\partial_T V_r&= \partial^2_X V_r - \frac43 V_r - 2\sqrt{\tfrac{1}{3}}\partial_X
V_i , \\
\partial_T V_i &= \partial^2_X  V_i+ 2\sqrt{\tfrac{1}{3}}\partial_X V_r .
\end{align*}
It is solved by
\[
V_r = \widehat{V}_r e^{ikX} e^{\lambda T} , \quad
V_i = \widehat{V}_i e^{ikX} e^{\lambda T},
\]
where
\begin{align*}
\lambda \widehat{V}_r &= - k^2 \widehat{V}_r - \frac43 \widehat{V}_r -
2\sqrt{\tfrac{1}{3}} i k  \widehat{V}_i ,\\
\lambda \widehat{V}_i &= - k^2 \widehat{V}_i +2\sqrt{\tfrac{1}{3}} i k  \widehat{V}_r.
\end{align*}
The condition for non-trivial solutions 
\[
\det
\begin{pmatrix}
- k^2 - \lambda - \frac43 &- 2\sqrt{\tfrac{1}{3}}  ik \\
2\sqrt{\tfrac{1}{3}}  ik             & - k^2 - \lambda
\end{pmatrix}= \lambda^2 + 2 k^2 \lambda + k^4 + \frac43 \lambda = 0
\]
leads to the curves of eigenvalues
\begin{equation} \label{alcu3}
2 \lambda_{1/2}(k) = - \left( 2 k^2 + \frac43 \right) \pm
\sqrt{\left( 2 k^2 + \frac43 \right)^2 - 4 k^4}.
\end{equation}
The  expansion at $k = 0$ is given by 
\[
\lambda_ 1(k) = - \frac34 k^4 + \mathcal{O}(k^6), \quad \lambda_2(k) = - \frac43 + \mathcal{O}(k^2).
\]

\subsection{Linear asymptotic analysis}

\label{sec3}

Hence, the modes associated to the curve $ \lambda_2 $ are exponentially damped, whereas the 
curve $ \lambda_1 $ comes up to zero and leads to at most to polynomial decay rates.
For the linear equation the modes will concentrate at $ k = 0 $ such that the expansion 
of  $ \lambda_1 $ at $ k = 0 $ plays a crucial role. 
Diagonalizing the linear part leads to a change of variables $ (V_r,V_i) \mapsto (V_s,V_c) $
with the asymptotic model
$$
\partial_T \widehat{V}_c = - \frac34 k^4 \widehat{V}_c.
$$
It is solved by
\[
\widehat{V}_c(k,T) = e^{-\frac34 k^4 T} \widehat{V}_c (k,0)
\]
and shows some self-similar behavior, namely  
$$
\widehat{V}_c (\kappa T^{-\frac14},T) = e^{- \frac34 \kappa^4} \widehat{V}_c (\kappa T^{-\frac14},0) = e^{- \frac34 \kappa^4} (1 + \mathcal{O}(T^{-1/4})),
$$
provided the solution is normalized with $\widehat{V}_c (0,0)=1$.
Hence, for $ T \to \infty $ the solutions will behave like  the self-similar solution
$$
\widehat{V}_c (k,T) = \widehat{\Phi}_{lin} \bigl(k T^{\frac14} \bigr), \qquad
\text{with} \qquad \widehat{\Phi}_{lin}(\kappa) = e^{- \frac34 \kappa^4}.
$$
Transferring these formulas into physical space shows that for $ T \to \infty $  the solutions of 
\begin{equation} \label{alcu1}
\partial_T V_c = - \frac34 \partial^4_X V_c
\end{equation}
will behave like  the self-similar solution
\begin{equation} \label{alcu2}
V_c(X,T) = {T^{-1/4}} {\Phi}_{lin} \bigl({X}{T^{-1/4}}\bigr),
\end{equation}
where $ {\Phi}_{lin} =  \mathcal{F}^{-1} \widehat{\Phi}_{lin} $.

At leading order in the limit $ k\to 0$, we have
$$
\widehat{V}_r = \widehat{V}_s - \tfrac{\sqrt{3}}{2} i k \widehat{V}_c
\qquad \text{and} \qquad
\widehat{V}_i = \widehat{V}_c - \tfrac{\sqrt{3}}{2} i k \widehat{V}_s,
$$
so we expect the following scaling in the original variables
\begin{equation} \label{orig}
V_r \sim {T^{-1/2}}, \quad V_i \sim {T^{-1/4}}, \quad \partial_X
\sim{T^{-1/4}}, \quad \text{and} \quad \partial_T \sim T^{-1},
\end{equation}
at least at the linear level.

\subsection{Nonlinear asymptotic analysis}

According to the explanations from the introduction, polynomial decay rates 
do not allow to control all nonlinear terms in a neighborhood of the origin. Therefore, we have to compute the effective nonlinearity. As already said, it turns out that there is one 
marginal nonlinear term  which leads to a nonlinear correction of the linear limit profile $\Phi_{lin}$, 
but not 
to an instability or to a change in the decay rates. 

In order to compute this nonlinear correction to \eqref{alcu1} we suppose that the dynamics is in fact 
controlled by the linear dynamics \eqref{alcu2}, \ie, we consider the 
asymptotic decays given by \eqref{orig}.
Since $ V_c $ is linearly exponentially damped at $ k = 0 $, we expect that 
$ V_r $ is slaved by $ V_i $ for large times.
We find
$$
\underbrace{\partial_T V_r}_{\sim T^{-3/2}} = \underbrace{\partial_X^2 V_r}_{\sim T^{-1}} -\underbrace{\tfrac{4}{3} V_r}_{\sim T^{-1/2}}  -\underbrace{2\sqrt{\tfrac{1}{3}}\partial_X V_i}_{\sim T^{-1/2}}
- \frac{2}{3}(\underbrace{3 V_r^2}_{\sim T^{-1}} + \underbrace{V_i^2}_{\sim T^{-1/2}}+\underbrace{V_r^3}_{\sim T^{-3/2}}+\underbrace{V_rV_i^2}_{\sim T^{-1}}).
$$
Equating the terms of decay $ T^{-1/2} $ to zero yields
$$ 
0 = -\tfrac{4}{3} V_r- 2\sqrt{\tfrac{1}{3}}\partial_X V_i-\tfrac{2}{3} V_i^2 
\qquad \text{or equivalently} \qquad 
 V_r = - \tfrac{1}{2}\sqrt{3}\partial_X V_i-\tfrac{1}{2} V_i^2 .
$$ 
Inserting this into the equation for $ V_i $ 
$$
\underbrace{\partial_T V_i}_{\sim T^{-5/4}}  =  \underbrace{\partial_X^2 V_i}_{\sim T^{-3/4}}  +  \underbrace{2\sqrt{\tfrac{1}{3}}\partial_X  V_r}_{\sim T^{-3/4}}   -\tfrac{2}{3}(\underbrace{2V_rV_i}_{\sim T^{-3/4}} +\underbrace{V_r^2V_i}_{\sim T^{-5/4}} +\underbrace{V_i^3}_{\sim T^{-3/4}} )
$$
gives for the terms of decay $ T^{-3/4} $ that 
$$ 
\partial_X^2 V_i+ 2\sqrt{\tfrac{1}{3}}\partial_X ( - \tfrac{1}{2}\sqrt{3}\partial_X V_i-\tfrac{1}{2} V_i^2)
 -\tfrac{4}{3}(- \tfrac{1}{2}\sqrt{3}\partial_X V_i-\tfrac{1}{2} V_i^2 )V_i -\tfrac{2}{3}
 V_i^3 = 0 .
$$
Since this expression vanishes identically, we need to include the $ T^{-1} $ terms into the expression of $ V_r $ in terms of $ V_i $
\begin{eqnarray*}
\underbrace{\tfrac{4}{3} V_r}_{\sim T^{-1/2}} & = & \underbrace{\partial_X^2 V_r}_{\sim T^{-1}}  -\underbrace{2\sqrt{\tfrac{1}{3}}\partial_X V_i}_{\sim T^{-1/2}}
- \tfrac{2}{3}(\underbrace{3 V_r^2}_{\sim T^{-1}} + \underbrace{V_i^2}_{\sim T^{-1/2}}+\underbrace{V_rV_i^2}_{\sim T^{-1}})
\\ & = &  \underbrace{- 2\sqrt{\tfrac{1}{3}}\partial_X V_i - \tfrac{2}{3}V_i^2 }_{\sim T^{-1/2}}
\\ && + \tfrac{1}{6}
(\underbrace{-V_i^4 -15 (\partial_X V_i)^2  - 4 \sqrt{3} V_i^2 \partial_X V_i  - 6 V_i \partial_X^2 V_i -
3\sqrt{3} \partial_X^3 V_i}_{\sim T^{-1}} ).
\end{eqnarray*}
Inserting this into the equation for $ V_i $ yields
$$ 
\partial_T V_i = - \frac34 \partial^4_X V_i - \frac{3}{2}
\sqrt{3} \partial_X \left(( \partial_X V_i)^2\right) + \mathcal{O}(T^{-5/4}).
$$ 
Hence, there is a  nonlinear term which is asymptotically of the same order 
as the linear terms $ \partial_T V_i $ and $ - \frac34 \partial_X^4 V_i $ for $ T \to \infty $
and so  the asymptotic behavior will be governed by the self-similar solutions of 
\begin{equation}\label{bienne}
\partial_T V_i = - \frac34 \partial^4_X V_i -\frac{3}{2}
\sqrt{3} \partial_X \left(( \partial_X V_i)^2\right) .
\end{equation}
Fortunately, as already said,
the marginal term  $ -\frac{3}{2}
\sqrt{3} \partial_X \left(( \partial_X V_i)^2\right) $
will only lead  to a nonlinear correction of the limit profile, but not 
to an instability or to a change in the decay rates. 
Although not necessary for the proof of Theorem \ref{yuri1},
the nonlinear correction of the limit profile is computed in Appendix \ref{appA}.

\begin{remark}\label{remphasdiff}
It is not a surprise that a system of the form \eqref{bienne} is obtained.
The so-called phase diffusion equations can be derived from the Ginzburg-Landau equation for the local wave number $ \Psi $, cf. \cite{MS04MN}.
The amplitude $ \Psi $ satisfies a system of the form 
$ \partial_{\tau} \Psi = \partial_{\xi}^2 h(\Psi) $ with $ h''(0) = \lambda_1|_{k=0}[q]/2 $,
and describes  small modulations in time $ \tau = \delta T $ and space $ \xi = \delta X $ of the 
periodic wave  $ A_q $, where $ 0 < \delta \ll 1 $ is a small perturbation parameter. This equation degenerates for $ q^2 =1/3 $. 
Since at lowest order $ \Psi \sim \partial_{\xi} V_i $,
at $ q^2 = 1/3 $ we have a system
$$ 
\partial_{\tau} V_i= \partial_{\xi} (h(\partial_{\xi}  V_i)) + h.o.t. \sim   \partial_{\xi} (
(\partial_{\xi} V_i)^2) + h.o.t. 
$$
The linear term $ \partial_{\xi}^4 V_i $ is of higher order w.r.t.~the scaling used in the derivation 
of the phase diffusion equation.
\end{remark}

\section{Some preparations}
\label{secneu3}

We start now with the proof of Theorem \ref{yuri1}.

\subsection{Separation of the diffusive modes}
\label{sec4}

We introduce  $v=(V_r,V_i)^\top$ and abbreviate  \eqref{vrvi} as 
\[
\partial_Tv=Lv+N(v),
\]
where 
$$
L	=\begin{pmatrix} \partial_X^2-\tfrac{4}{3} & -2\sqrt{\tfrac{1}{3}}\partial_X \\ 2\sqrt{\tfrac{1}{3}}\partial_X & \partial_X^2 \end{pmatrix} \qquad \text{and} \qquad 
 N(v)=-\frac{2}{3}\begin{pmatrix} 3 V_r^2 + V_i^2+V_r^3+V_rV_i^2\\ 
 2V_rV_i+V_r^2V_i+V_i^3
\end{pmatrix}.
$$
At this point it turns out to be advantageous to work in Fourier space. 
Hence we consider 
\begin{equation} \label{spain}
\partial_T \widehat{v} =\widehat{L} \widehat{v} + \widehat{N}(\widehat{v}), 
\end{equation}
where $ \widehat{v} = \mathcal{F} v $, $ \widehat{L} = \mathcal{F} L \mathcal{F}^{-1} $, and  $ \widehat{N}(\widehat{v}) = \mathcal{F}( N (\mathcal{F}^{-1}\widehat{v}) ) $.

There exists a $ k_0 > 0 $ such that for all $ |k| \leq k_0 $ 
the two  curves of eigenvalues $ \lambda_{1,2} $ defined in  \eqref{alcu3} are separated, and so we define 
$$ 
 \widehat{P}_c(k)\widehat{v}(k) = \chi(k) \langle 
\widehat{\varphi}^*_{1}(k),\widehat{v}(k) \rangle \widehat{\varphi}_{1}(k),
$$ 
where 
$ \chi(k) = 1 $ for $ |k| \leq k_0/2 $,
and $ \chi(k) = 0 $ for $ |k| > k_0/2 $, and where $ \widehat{\varphi}^*_{1}(k) $ is the eigenvector 
associated to the adjoint eigenvalue problem normalized by $  \langle 
\widehat{\varphi}^*_{1}(k), \widehat{\varphi}_{1}(k)\rangle = 1 $. Moreover, define $ \widehat{P}_s(k)  \widehat{v}(k)  =  \widehat{v}(k)-  \widehat{P}_c(k)\widehat{v}(k) $. 
We use the projections to separate \eqref{spain} in two parts, namely
\begin{equation} \label{toljan}
\partial_T \widehat{v}_c  =  \widehat{L}_c \widehat{v}_c + \widehat{P}_c \widehat{N}(\widehat{v}),\qquad
\partial_T \widehat{v}_s  =  \widehat{L}_s  \widehat{v}_s + \widehat{P}_s \widehat{N}(\widehat{v}),
\end{equation}
where $ \widehat{L}_c = \widehat{L} \widehat{P}_c $ 
and $ \widehat{L}_s = \widehat{L} \widehat{P}_s $. By construction the operators $ \widehat{P}_s  $ and 
$ \widehat{P}_c  $  commute with  $  \widehat{L} $.
System \eqref{toljan} is solved with $ \widehat{v}_c|_{t=0} = \widehat{P}_c(k)\widehat{v}|_{t=0}  $
and $ \widehat{v}_s|_{t=0} = \widehat{P}_s(k)\widehat{v}|_{t=0}  $. Then $ \widehat{v}_c $ and 
$ \widehat{v}_s $ are defined via the solutions of  \eqref{toljan}.

Moreover, we introduce $ \widehat{V}_c $ by $  \widehat{v}_c (k,t)    =  \widehat{V}_c(k,t) \widehat{\varphi}_{1}(k) $, and 
$ \widehat{V}_s $ for  $ |k| \leq k_0/2 $ by $  \widehat{v}_s (k,t)    =  \widehat{V}_s(k,t) \widehat{\varphi}_{2}(k) $.

\subsection{Linear decay estimates}
\label{sec5}

In order to show the  nonlinear stability of $ A_{\sqrt{1/3}} $ we use the polyomial decay rates
of the linear semigroup generated by $ L $. However, the optimal  decay rate $ T^{-1/4} $ of the semigroup is only obtained 
as a mapping from $ L^1 $ to $ L^{\infty} $ in physical space,  or from 
$ L^{\infty} $ to $ L^1 $ in Fourier space. Therefore, we have to work with at least two spaces.
In Fourier space the $ L^{\infty} $-norm 
of the solutions of $ \partial_T \widehat{v} = \widehat{L} \widehat{v}  $
will be bounded and the $ L^1 $-norm will decay 
as $ T^{-1/4} $, both  for  initial conditions  in $ L^{\infty} \cap  L^1 $.

Since the sectorial operator $  \widehat{L}_s $ has spectrum in the left half plane strictly bounded away from the imaginary axis,  we obviously have the following result, cf. \cite{He81}.

\begin{lemma}
For the analytic semigroup generated by  $  \widehat{L}_s $ 
we have the estimates 
\[
 \|e^{T\widehat{L}_s}\|_{L^1\to L^1}\leq Ce^{-\sigma_s T/2}, \quad\text{ and }\quad \|e^{T\widehat{L}_s}\|_{L^\infty \to L^\infty}\leq C e^{-\sigma_s T/2},
\]
with some $\sigma_s>0$. 
\end{lemma}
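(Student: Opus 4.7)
Strategy. Since $\widehat{L}_s$ is a matrix-valued Fourier multiplier, acting pointwise in $k$ by the $2\times 2$ symbol $\widehat{L}_s(k)$, the semigroup $e^{T\widehat{L}_s}$ is also a multiplier, with symbol $e^{T\widehat{L}_s(k)}$. The plan is therefore to reduce both stated inequalities to the single uniform symbol bound
\[
\sup_{k\in\mathbb{R}}\bigl\|e^{T\widehat{L}_s(k)}\bigr\|_{\mathbb{C}^2\to\mathbb{C}^2}\leq C\,e^{-\sigma_s T/2}.
\]
Once this is available, for $\mathbb{C}^2$-valued $\widehat u$ one has $|e^{T\widehat{L}_s(k)}\widehat u(k)|\leq\|e^{T\widehat{L}_s(k)}\|\,|\widehat u(k)|$ pointwise, and integrating (resp.\ taking supremum) in $k$ yields the $L^1\to L^1$ (resp.\ $L^\infty\to L^\infty$) estimate.

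Spectral analysis of the symbol. The key input is a uniform spectral gap. From \eqref{alcu3} the discriminant $16k^2/3+16/9$ is always at least $16/9$, so the eigenvalues $\lambda_1(k),\lambda_2(k)$ of $\widehat{L}(k)$ are distinct and real for every $k$. I would then split the argument at $|k|=k_0/2$. On $|k|\leq k_0/2$, the range of $\widehat{P}_s(k)$ is the one-dimensional $\lambda_2$-eigenline, which is $\widehat{L}$-invariant, so $\widehat{L}_s(k)$ acts there as scalar multiplication by $\lambda_2(k)$; shrinking $k_0$ if necessary, continuity together with $\lambda_2(0)=-4/3$ gives $\mathrm{Re}\,\lambda_2(k)\leq -1$. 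On $|k|>k_0/2$, where $\widehat{P}_s(k)=I$ and $\widehat{L}_s(k)=\widehat{L}(k)$, continuity, strict negativity for $k\neq 0$, and the asymptotics $\lambda_j(k)\to -\infty$ as $|k|\to\infty$ yield $\max\{\lambda_1(k),\lambda_2(k)\}\leq -2\sigma_s$ for some $\sigma_s>0$.

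From spectral gap to the matrix exponential. On the low-frequency region the scalar case is immediate. On $|k|>k_0/2$ I would diagonalize $\widehat{L}(k)=S(k)\,\mathrm{diag}(\lambda_1(k),\lambda_2(k))\,S(k)^{-1}$ and use $\|e^{T\widehat{L}(k)}\|\leq\kappa(S(k))\,e^{-2\sigma_s T}$, where $\kappa(S(k))=\|S(k)\|\|S(k)^{-1}\|$. The condition number $\kappa(S(k))$ is continuous and finite by distinctness of eigenvalues, and as $|k|\to\infty$ the symbol is a real scalar $-k^2$ plus an $O(|k|)$ skew perturbation whose eigenvectors converge to a fixed well-conditioned pair; thus $\kappa(S(\cdot))$ is bounded on $|k|\geq k_0/2$. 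Absorbing this uniform constant into the exponential by weakening $2\sigma_s$ to $\sigma_s/2$ produces the required uniform symbol bound.

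Main obstacle. The subtle point is uniformity of the diagonalizing transformation in $k$, together with the discontinuity that the cutoff $\chi$ in $\widehat{P}_c$ introduces at $|k|=k_0/2$. This is handled by treating the two regions separately, since only $L^\infty$-in-$k$ control is needed rather than any smoothness. Alternatively, one may skip the explicit matrix calculation and invoke the sectorial operator machinery of \cite{He81}: the above spectral analysis shows that $\widehat{L}_s$ is sectorial with spectrum in $\{\mathrm{Re}\,z\leq-\sigma_s\}$ and uniform resolvent estimates in $k$, so the analytic semigroup it generates automatically satisfies the claimed exponential decay in any translation-invariant $L^p$ norm on $\mathbb{R}$, in particular in $L^1$ and $L^\infty$.
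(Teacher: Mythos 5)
Your argument is correct, and it is in fact more explicit than what the paper does: the paper's entire proof is the one-line observation that $\widehat{L}_s$ is sectorial with spectrum strictly in the left half plane, so the exponential bounds follow ``obviously'' from analytic semigroup theory, cf.\ \cite{He81} --- essentially the alternative you sketch in your last sentence. Your primary route instead verifies the bound by hand at the level of the symbol: since $e^{T\widehat{L}_s}$ acts pointwise in $k$, a uniform bound $\sup_k\|e^{T\widehat{L}_s(k)}\|\leq Ce^{-\sigma_s T/2}$ immediately gives both the $L^1\to L^1$ and $L^\infty\to L^\infty$ estimates, and you obtain that bound from the uniform spectral gap (the discriminant in \eqref{alcu3} is bounded below by $16/9$, so the eigenvalues are real, uniformly separated, and $\lambda_2(k)\leq -4/3$ everywhere while both eigenvalues are $\leq -2\sigma_s$ on $|k|\geq k_0/2$) together with a uniformly bounded diagonalization. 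Two remarks: your condition-number discussion can be short-circuited, since the symbol $\widehat{L}(k)$ is Hermitian for real $k$ (the off-diagonal entries $\mp 2\sqrt{1/3}\,ik$ are complex conjugates), so $\|e^{T\widehat{L}(k)}\|=e^{T\max_j\lambda_j(k)}$ with no constant at all; and your restriction to the range of $\widehat{P}_s$ on $|k|\leq k_0/2$ is exactly the right reading of the lemma, because $\widehat{L}_s$ vanishes on the range of $\widehat{P}_c$, so the stated decay only holds on the invariant subspace $\mathrm{ran}\,\widehat{P}_s$, which is how the estimate is used in \eqref{shs136}. What your approach buys is a self-contained, elementary proof with no appeal to abstract sectorial machinery; what the paper's citation buys is brevity and a formulation that transfers verbatim to more general systems.
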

For the $ \widehat{v}_c  $-part  we obtain
\begin{lemma}
Let $\nu\geq 0$.
For the analytic semigroup generated by $\widehat{L}_c$
we have the estimates 
\[
 \|e^{T\widehat{L}_c}|k|^\nu\|_{L^1\to L^1}\leq CT^{-\nu/4},\quad 
 \|e^{T\widehat{L}_c}|k|^\nu\|_{L^\infty \to L^\infty}\leq C T^{-\nu/4},\quad 
 \|e^{T\widehat{L}_c}|k|^\nu\|_{L^\infty \to L^1}\leq C T^{-(\nu+1)/4}.
\]
\end{lemma}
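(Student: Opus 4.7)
Since $\widehat{L}_c$ is a Fourier multiplier acting on a scalar-valued unknown times the smooth eigenvector $\widehat{\varphi}_1(k)$, and since the cutoff $\chi$ in the definition of $\widehat{P}_c$ confines everything to $|k|\leq k_0/2$, the semigroup $e^{T\widehat{L}_c}$ acts componentwise by multiplication with the bounded symbol
\[
m_T(k) := \chi(k)\, e^{\lambda_1(k) T},
\]
modulated by the smooth, bounded, compactly supported matrix factor coming from the eigenprojection $\widehat{P}_c$. Thus the plan is to reduce all three estimates to pointwise bounds on $|k|^\nu m_T(k)$ and then invoke the trivial inequalities $\|m \widehat{f}\|_{L^p}\leq \|m\|_{L^\infty}\|\widehat{f}\|_{L^p}$ (for $p=1,\infty$) and $\|m \widehat{f}\|_{L^1}\leq \|m\|_{L^1}\|\widehat{f}\|_{L^\infty}$.

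The first step is the spectral input: from \eqref{alcu3} together with the expansion $\lambda_1(k) = -\frac{3}{4} k^4 + \mathcal{O}(k^6)$, one picks $k_0>0$ small enough so that on the support $|k|\leq k_0/2$ of $\chi$ one has $\mathrm{Re}\, \lambda_1(k) \leq -c k^4$ for some $c>0$ (concretely $c$ slightly below $3/4$ suffices). The projection factors $\widehat{\varphi}_1(k)$ and $\widehat{\varphi}_1^*(k)$ are analytic in $k$ on this set, hence uniformly bounded, so they only contribute a harmless constant to the operator norms.

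The second step is two elementary pointwise estimates. After the substitution $k = \kappa T^{-1/4}$, one gets
\[
\sup_{k\in \R}\, |k|^\nu \chi(k)\, e^{-c k^4 T} \;\leq\; C\, T^{-\nu/4},
\qquad
\int_{\R} |k|^\nu \chi(k)\, e^{-c k^4 T}\,\mathrm{d}k \;\leq\; C\, T^{-(\nu+1)/4},
\]
the first by maximising $\kappa^\nu e^{-c \kappa^4}$, the second by factoring out $T^{-(\nu+1)/4}$ and using the integrability of $\kappa^\nu e^{-c \kappa^4}$ on $\R$. Inserting these into the multiplier inequalities above yields, respectively, the $L^1 \to L^1$ and $L^\infty \to L^\infty$ bounds in the first two displayed estimates, and the $L^\infty \to L^1$ bound in the third.

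There is essentially no obstacle here beyond bookkeeping; the only thing to watch is that the multiplier is genuinely a scalar times a smooth vector on the range of $\widehat{P}_c$, so that one may pull out the matrix-valued eigenvector factor as a uniformly bounded multiplicative constant. For small $T$ (say $T\leq 1$) the claimed bounds with the factor $T^{-\nu/4}$ or $T^{-(\nu+1)/4}$ fail at $T=0$ when $\nu>0$; this is harmless because of the compact support of $\chi$: on $|k|\leq k_0/2$ the weight $|k|^\nu$ is itself bounded, giving uniform bounds on bounded time intervals, and one may absorb these constants in $C$ or, equivalently, replace $T$ by $1+T$ throughout, which is what the subsequent nonlinear analysis in Section~\ref{sec7} will actually use.
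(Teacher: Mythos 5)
Your proposal is correct and follows essentially the same route as the paper: since $e^{T\widehat{L}_c}$ acts as multiplication by $e^{\lambda_1(k)T}$ on the range of $\widehat{P}_c$ with $\lambda_1(k)\leq -Ck^4$ there, all three bounds reduce to $\|\,|k|^\nu e^{\lambda_1(k)T}\|_{L^\infty}\leq CT^{-\nu/4}$ and $\|\,|k|^\nu e^{\lambda_1(k)T}\|_{L^1}\leq CT^{-(\nu+1)/4}$ via the scaling $k=\kappa T^{-1/4}$, exactly as in the paper's proof. Your extra remarks about the bounded eigenvector factors and the behaviour for small $T$ are harmless refinements of the same argument.
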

\noindent
{\bf Proof.}
Since $ \lambda_1(k)\leq -Ck^4$ for small $ k $ and $ e^{T \widehat{L}_c(k)}\widehat{v}_c(k) = e^{\lambda_1(k) T} \widehat{v}_c(k) $ we obviously have 
\begin{eqnarray*}
\| e^{T \widehat{L}_c}|k|^\nu\widehat{v}_c \|_{L^1} &\leq & \| e^{ \lambda_1(k) T}|k|^\nu \|_{L^{\infty}}\| \widehat{v}_c \|_{L^1}
\leq C T^{-\nu/4}\| \widehat{v}_c \|_{L^1},\\
\| e^{T \widehat{L}_c}|k|^\nu\widehat{v}_c \|_{L^\infty} &\leq & \| e^{ \lambda_1(k) T}|k|^\nu \|_{L^{\infty}}\| \widehat{v}_c \|_{L^\infty}
\leq C T^{-\nu/4}\| \widehat{v}_c \|_{L^\infty},\\
\| e^{T \widehat{L}_c}|k|^\nu\widehat{v}_c \|_{L^{1}} &\leq & \| e^{ \lambda_1(k) T}|k|^\nu \|_{L^{1}}\| \widehat{v}_c \|_{L^{\infty}}
\leq C T^{-(\nu+1)/4}\| \widehat{v}_c \|_{L^{\infty}} .
\end{eqnarray*}
\qed

\subsection{Formal irrelevance of the nonlinear terms}
\label{sec6}

After showing  decay rates for the linear semigroup we have to establish 
the irrelevance of the nonlinearity w.r.t.~this linear  behavior. In view of
future applications we will consider  a general  nonlinearity and not only quadratic and cubic terms.
In order to do so we expand the nonlinear terms into 
\begin{eqnarray*}
\widehat{P}_c
 \widehat{N}(\widehat{v}) & = & B_{2,1}(\widehat{v}_c,\widehat{v}_c) + B_{3,1}(\widehat{v}_c,\widehat{v}_c,\widehat{v}_c) 
+ B_{4,1}(\widehat{v}_c,\widehat{v}_c,\widehat{v}_c,\widehat{v}_c) + B_{5,1}(\widehat{v}_c,\widehat{v}_c,\widehat{v}_c,\widehat{v}_c,\widehat{v}_c) \\ && + B_{2,2}(\widehat{v}_c,\widehat{v}_s)+ B_{3,2}(\widehat{v}_c,\widehat{v}_c,\widehat{v}_s)
+ B_{4,2}(\widehat{v}_c,\widehat{v}_c,\widehat{v}_c,\widehat{v}_s) \\
&& + B_{2,3}(\widehat{v}_s,\widehat{v}_s) + B_{3,3}(\widehat{v}_c,\widehat{v}_s,\widehat{v}_s) 
+g_c(\widehat{v}_c,\widehat{v}_s),\\ 
\widehat{P}_s \widehat{N}(\widehat{v})& = & B_{2,4}(\widehat{v}_c,\widehat{v}_c) + B_{3,4}(\widehat{v}_c,\widehat{v}_c,\widehat{v}_c) 
+ B_{4,4}(\widehat{v}_c,\widehat{v}_c,\widehat{v}_c,\widehat{v}_c) \\ && + B_{2,5}(\widehat{v}_c,\widehat{v}_s)+ B_{3,5}(\widehat{v}_c,\widehat{v}_c,\widehat{v}_s)
 + B_{2,6}(\widehat{v}_s,\widehat{v}_s) 
+ g_s(\widehat{v}_c,\widehat{v}_s),
\end{eqnarray*}
where the $ B_{m,j} $ are symmetric $m$-linear mappings,
and where $ g_c $ and $ g_s $ stand for the remaining terms,
which due to Young's inequality for convolutions 
satisfy
\begin{eqnarray*}
\| g_c(\widehat{v}_c,\widehat{v}_s) \|_{L^1}& \leq  & C(\|\widehat{v}_c\|_{L^1}^6 + \|\widehat{v}_c\|_{L^1}^4
\|\widehat{v}_s\|_{L^1}
 + \|\widehat{v}_c\|_{L^1}^2 \|\widehat{v}_s\|_{L^1}^2+\|\widehat{v}_s\|_{L^1}^3 ),\\
\| g_s(\widehat{v}_c,\widehat{v}_s)  \|_{L^1} & \leq & C(\|\widehat{v}_c\|_{L^1}^5 + \|\widehat{v}_c\|_{L^1}^3 
\|\widehat{v}_s\|_{L^1}+ \|\widehat{v}_c\|_{L^1} \|\widehat{v}_s\|_{L^1}^2+\|\widehat{v}_s\|_{L^1}^3)
\end{eqnarray*}
for sufficiently small $ \|\widehat{v}_s\|_{L^1} $ and $ \|\widehat{v}_s\|_{L^1} $.
We note that for the Ginzburg-Landau equation \eqref{spain}, the bilinear and trilinear terms are the only nonvanishing terms in these expansions.
The splitting is motivated as follows.
If $ \widehat{v}_c $ decays like $ T^{-1/4} $, then $ \widehat{v}_s $, 
which  is expected to be formally slaved to $ \widehat{v}_c $,
decays at least  like $ T^{-1/2} $.
Then $ g_c $ decays like $ T^{-3/2} $ and is therefore irrelevant w.r.t.~the linear  dynamics of $ \widehat{v}_c $.
Here and in the following the decays are referred to the decay of the $L^\infty$-norm of $v$ or the $L^1$-norm of $\hat{v}$, cf. Section \ref{sec7}.

In order to prove the irrelevance of the other terms w.r.t.~the linear dynamics of $ \widehat{v}_c $, 
except for the marginal  one found in Section \ref{sec3},
we make a change of coordinates which removes in the equation for $ \widehat{v}_c $ all terms containing $ \widehat{v}_s $ except in  $ g_c(\widehat{v}_c,\widehat{v}_s) $.
This change of coordinates  motivates the splitting in the equation for $ \widehat{v}_s $ 
and is defined by solving 
\begin{eqnarray} \label{mice56}
0 & = & \widehat{L}_s  \widehat{v}_s  + B_{2,4}(\widehat{v}_c,\widehat{v}_c) + B_{3,4}(\widehat{v}_c,\widehat{v}_c,\widehat{v}_c) 
+ B_{4,4}(\widehat{v}_c,\widehat{v}_c,\widehat{v}_c,\widehat{v}_c) \\ && + B_{2,5}(\widehat{v}_c,\widehat{v}_s)+ B_{3,5}(\widehat{v}_c,\widehat{v}_c,\widehat{v}_s)
 + B_{2,6}(\widehat{v}_s,\widehat{v}_s)  \nonumber 
\end{eqnarray}
w.r.t.~$ \widehat{v}_s $. For small $\widehat{v}_c $  the implicit function theorem can be  applied in $ L^1 \cap L^{\infty} $ since $ \widehat{L}_s(k)  $ is invertible on the range of $\widehat{P}_s(k)$. Hence there exists a solution $  \widehat{v}_s = \widehat{v}_s^*(\widehat{v}_c) $ 
where $ \widehat{v}_s^*(\widehat{v}_c)  $ is arbitrarily smooth from 
$ L^1 \cap L^{\infty} \to L^1 \cap L^{\infty} $
due to the compact support of $ \widehat{v}_c $
and the polynomial character of \eqref{mice56}.
Hence, we have the following estimate 
\begin{equation} \label{dance2}
\| \widehat{v}_s^*(\widehat{v}_c) \|_{L^1} \leq C \| \widehat{v}_c \|_{L^1}^2
\end{equation}
for $  \| \widehat{v}_c \|_{L^1} $ sufficiently small.
We set
\begin{equation}\label{coo22}
 \widehat{v}_c =  \widehat{w}_c, \qquad
 \widehat{v}_s =  \widehat{v}_s^*(\widehat{w}_c)  + \widehat{w}_s.
\end{equation}
As we will see the new variable $ \widehat{w}_s$ decays like $ T^{-5/4} $.
This decay rate allows us to handle all $ \widehat{w}_s$ terms in the equation 
for $ \widehat{w}_c$ immediately as irrelevant.
As before we introduce 
$ \widehat{W}_c $ by $  \widehat{w}_c (k,t)    =  \widehat{W}_c(k,t) \widehat{\varphi}_{1}(k) $, and 
$ \widehat{W}_s $ for  $ |k| \leq k_0/2 $ by $  \widehat{w}_s (k,t)    =  \widehat{W}_s(k,t) \widehat{\varphi}_{2}(k) $.

Applying the transformation \eqref{coo22} we find from
$$ 
 \partial_T \widehat{v}_s =  \widehat{v}_s^{*'}(\widehat{w}_c)  \partial_T \widehat{w}_c +  \partial_T  \widehat{w}_s
$$ 
that 
\begin{eqnarray*}
 \partial_T  \widehat{w}_s & = &  \widehat{L}_s  \widehat{v}_s + \widehat{P}_s \widehat{N}(\widehat{v})
-  \widehat{v}_s^{*'}(\widehat{w}_c)  \partial_T \widehat{w}_c \\ 
& = &   \widehat{L}_s  \widehat{w}_s + ( \widehat{L}_s   \widehat{v}_s^*(\widehat{w}_c) 
 + \widehat{P}_s \widehat{N}(\widehat{v}))
-  \widehat{v}_s^{*'}(\widehat{w}_c)  \partial_T \widehat{w}_c, 
\end{eqnarray*}
where $ \widehat{v}_s^{*'}(\widehat{w}_c) $ is  the Fr\'echet derivative 
at the point $ \widehat{w}_c $
acting on $\partial_T w_c$. For $  \| \widehat{w}_c \|_{L^1} $ sufficiently small, we have
\begin{equation} \label{dance2b}
\| \widehat{v}_s^{*'}(\widehat{w}_c) \partial_T \widehat{w}_c   \|_{L^1} \leq C \| \widehat{w}_c \|_{L^1}
 \| \partial_T \widehat{w}_c   \|_{L^1}.
\end{equation}
By \eqref{mice56} we remove all terms of lower order in 
$ \widehat{L}_s   \widehat{v}_s^*(\widehat{w}_c) 
 + \widehat{P}_s \widehat{N}(\widehat{v}) $, \ie, we have 
$$ 
 \| \widehat{L}_s   \widehat{v}_s^*(\widehat{w}_c) 
 + \widehat{P}_s \widehat{N}(\widehat{v}) \|_{L^1} \leq C  ( 
\|\widehat{w}_c\|_{L^1}^5 + \|\widehat{w}_c\|_{L^1} \|\widehat{w}_s\|_{L^1}+  \|\widehat{w}_s\|_{L^1}^2)
$$ 
for sufficiently small $  \|\widehat{w}_c\|_{L^1} $ and $ \|\widehat{w}_s\|_{L^1} $,
and so we obtain a system 
\begin{equation}\label{shs1}
\begin{array}{lcl}
\partial_T \widehat{w}_c & = & \widehat{L}_c   \widehat{w}_c 
+ M_2(\widehat{w}_c) 
+ \widetilde{B}_{2}(\widehat{w}_c) 
+ \widetilde{B}_{3}(\widehat{w}_c) \\&& 
+ \widetilde{B}_{4}(\widehat{w}_c) 
+ \widetilde{B}_{5}(\widehat{w}_c) +\widetilde{g}_c(\widehat{w}_c,\widehat{w}_s),\\ 
\partial_T  \widehat{w}_s  & = & \widehat{L}_s \widehat{w}_s + \widetilde{g}_s(\widehat{w}_c,\widehat{w}_s),
\end{array}
\end{equation}
where $ M_2 $ is a bilinear mapping, and where the $ \widetilde{B}_{m} $ are symmetric $m$-linear mappings. The remaining terms in the $  \widehat{w}_c $-equation are collected in 
$  \widetilde{g}_c(\widehat{w}_c,\widehat{w}_s)  $ with 
$$
 \| \widetilde{g}_c(\widehat{w}_c,\widehat{w}_s) \|_{L^1} \leq  C ( \|\widehat{w}_c\|_{L^1}^6 + \|\widehat{w}_c\|_{L^1} \| \widehat{w}_s\|_{L^1}
 +\|\widehat{w}_s\|_{L^1}^2 )
 $$
 for sufficiently small $  \|\widehat{w}_c\|_{L^1} $ and $ \|\widehat{w}_s\|_{L^1} $.
The separation of the quadratic terms in $ M_2 $ and $  \widetilde{B}_{2}(\widehat{w}_c)$
is made to distinguish the marginal term from the irrelevant quadratic ones, \ie, 
 $ M_2 $ will be the counterpart to the marginal term  
$ -\frac{3}{2}
\sqrt{3}  \partial_X \left(( \partial_X \varphi)^2\right) $
in \eqref{bienne}. 
By construction of the transform \eqref{coo22} we have 
\begin{eqnarray*}
\| \widetilde{g}_s(\widehat{w}_c,\widehat{w}_s) \|_{L^1} & \leq  & C(
\|\widehat{w}_c\|_{L^1}^5 + \|\widehat{w}_c\|_{L^1} \|\widehat{w}_s\|_{L^1}+  \|\widehat{w}_s\|_{L^1}^2+\|\widehat{w}_c\|_{L^1}
\| \partial_T \widehat{w}_c\|_{L^1})
\end{eqnarray*}
for sufficiently small $  \|\widehat{w}_c\|_{L^1} $ and $ \|\widehat{w}_s\|_{L^1} $.
The terms in $  \widetilde{g}_s(\widehat{w}_c,\widehat{w}_s) $ all will turn out to be irrelevant w.r.t. the linear dynamics.
The term $ \partial_T \widehat{w}_c $ on the right hand side of the $ \widehat{w}_s $-equation
can be expressed by the right hand side of the $ \widehat{w}_c $-equation, such that \eqref{shs1} is a well-defined initial value problem.
However, we keep the notation with $ \partial_T \widehat{w}_c $
for the subsequent estimates.

The $ m $-linear terms $ \widetilde{B}_{m} $ are of the form
\begin{eqnarray*}
 \widetilde{B}_{2}(\widehat{w}_c)(k)   & = &  \left(  \int K_2(k,k-l,l)  \widehat{W}_c(k-l) \widehat{W}_c(l) \mathrm{d}l \right)\ \widehat{\varphi}_{1}(k) ,
   \\
 \widetilde{B}_{3}(\widehat{w}_c)(k)  & = & \left(  \int K_3(k,k-l,l-l_1,l_1)  \widehat{W}_c(k-l) \widehat{W}_c(l-l_1) \widehat{W}_c(l_1)\mathrm{d}l_1\mathrm{d}l \right)\ \widehat{\varphi}_{1}(k) ,
\end{eqnarray*}
and similarly for $  \widetilde{B}_{4} $ and $  \widetilde{B}_{5} $. 
The marginal term $ M_2 $ corresponding to $ -\frac{3}{2}
\sqrt{3}\partial_X \left(( \partial_X \varphi)^2\right) $ is given by 
\begin{equation} \label{marginalterm}
 {M}_{2}(\widehat{w}_c)(k)    =     \left(  \int K^*(k,k-l,l)  \widehat{W}_c(k-l) \widehat{W}_c(l) \mathrm{d}l \right)\ \widehat{\varphi}_{1}(k).
\end{equation}

In order to prove the irrelevance of 
$  \widetilde{B}_{2}, \ldots,  \widetilde{B}_{5} $ and the marginality of $ M_2 $ 
we need:
 \begin{lemma} \label{lem33}
The kernels $ K^* , K_2,\ldots,K_5 $
satisfy
\begin{equation} 
\begin{array}{rcl}
\label{alcu5}
|K^*(k,k_1,k_2) & \leq & C | k|| k_1 ||k_2|, \\ 
|K_2(k,k_1,k_2) | & \leq & C (|k|^4+|k_1|^4+|k_2|^4), \\
|K_3(k,k_1,k_2,k_3) | & \leq & C (|k|^3+|k_1|^3+|k_2|^3+|k_3|^3), \\
|K_4(k,k_1,k_2,k_3,k_4) | & \leq & C (|k|^2+|k_1|^2+|k_2|^2+|k_3|^2+ |k_4|^2), \\
|K_5(k,k_1,k_2,k_3,k_4,k_5) | & \leq & C (|k|+|k_1|+|k_2|+|k_3|+ |k_4|+|k_5|).
\end{array}
\end{equation}
for $  k, k_1,k_2,k_3,k_4,k_5 \to 0 $.
\end{lemma}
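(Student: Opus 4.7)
The plan is to compute each kernel explicitly from the construction in Section \ref{sec6} and to bound it via Taylor expansion around $ k = 0 $, exploiting the analyticity of the spectral building blocks and the cancellations forced by the slaving equation \eqref{mice56}.

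First I would collect the analytic data on $ |k| \leq k_0 $. Since the two curves of eigenvalues $ \lambda_{1,2}(k) $ from \eqref{alcu3} are well separated there, the eigenvectors $ \widehat{\varphi}_{1,2}(k) $, the adjoint eigenvector $ \widehat{\varphi}_1^*(k) $, the projections $ \widehat{P}_{c,s}(k) $, and the resolvent $ \widehat{L}_s^{-1}(k) $ restricted to the range of $ \widehat{P}_s $ are all analytic in $ k $. At $ k = 0 $ one has $ \widehat{\varphi}_1(0) = (0,1)^\top $, $ \widehat{\varphi}_2(0) = (1,0)^\top $, the first-order expansion $ \widehat{\varphi}_1(k) = (-\tfrac{\sqrt{3}}{2}ik, 1)^\top + \mathcal{O}(k^3) $, and $ \widehat{L}_s^{-1}(0) $ acts as the scalar $ -3/4 $ on the $ V_r $-direction, consistently with the relations carried out before \eqref{orig}.

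Next I would solve \eqref{mice56} by the implicit function theorem in the form of a power series $ \widehat{v}_s^*(\widehat{w}_c) = \widehat{v}_s^{*[2]}(\widehat{w}_c) + \widehat{v}_s^{*[3]}(\widehat{w}_c) + \widehat{v}_s^{*[4]}(\widehat{w}_c) + \ldots $, with $ \widehat{v}_s^{*[j]} $ symmetric $ j $-linear in $ \widehat{w}_c $ and determined algebraically by applying $ -\widehat{L}_s^{-1} $ to the $ j $-th order part of $ \widehat{P}_s\widehat{N} $. Evaluated at $ k = 0 $ one finds for instance $ \widehat{v}_s^{*[2]}|_{k=0} = -\tfrac{1}{2}W_c^2 $ in the $ V_r $-direction, which together with the first-order expansion of $ \widehat{\varphi}_1 $ reproduces the slaving relation $ V_r = -\tfrac{\sqrt{3}}{2}\partial_X V_i - \tfrac{1}{2}V_i^2 $ derived formally in Section \ref{sec3}. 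Substituting into $ \widehat{P}_c\widehat{N}(\widehat{w}_c + \widehat{v}_s^*(\widehat{w}_c)) $ and collecting by homogeneity in $ \widehat{w}_c $ yields the explicit symbols $ K^*, K_2, \ldots, K_5 $, which can be computed with Mathematica as announced in the introduction.

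The bounds \eqref{alcu5} then reduce to checking orders of vanishing at the origin. For $ K_2 $, the required vanishing is exactly the content of the cancellation identity
$$ \partial_X^2 V_i + 2\sqrt{\tfrac{1}{3}}\partial_X\!\left(-\tfrac{\sqrt{3}}{2}\partial_X V_i - \tfrac{1}{2}V_i^2\right) - \tfrac{4}{3}\!\left(-\tfrac{\sqrt{3}}{2}\partial_X V_i - \tfrac{1}{2}V_i^2\right)V_i - \tfrac{2}{3}V_i^3 = 0 $$
displayed in Section \ref{sec3}, together with the extraction of the marginal kernel $ K^*(k,k_1,k_2) \sim k \cdot k_1 \cdot k_2 $ from the next-order correction $ -\tfrac{3}{2}\sqrt{3}\partial_X((\partial_X V_i)^2) $. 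For $ K_3, K_4, K_5 $, cancellations of the same nature are needed: for instance at the quintic level the nontrivial contribution $ B_{3,3}(\widehat{w}_c, \widehat{v}_s^{*[2]}, \widehat{v}_s^{*[2]}) $ at $ k = 0 $ is cancelled exactly by $ B_{2,2}(\widehat{w}_c, \widehat{v}_s^{*[4]}) $, since $ \widehat{v}_s^{*[4]} $ is precisely determined by the quartic part of \eqref{mice56}. The main obstacle is the bookkeeping: each kernel decomposes into a sum of multilinear expressions whose Taylor coefficients at the origin must cancel up to the required order, and verifying this amounts to extending the formal computation of Section \ref{sec3} to one higher order in $ k $ for each of the kernels $ K_2, K_3, K_4, K_5 $.
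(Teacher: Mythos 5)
Your proposal is correct and takes essentially the same route as the paper: its proof of the lemma is exactly the explicit computation you outline --- expanding the eigenvectors/diagonalization and the slaving transform \eqref{mice56} at $k=0$ and reading off the vanishing orders of the kernels from the resulting cancellations --- carried out in Appendix~\ref{appB}, where the bookkeeping is organized as a formal power counting in $T^{-1/4}$ in physical space rather than your homogeneity-by-homogeneity Taylor expansion in Fourier space (the same accounting). The only additional ingredient in the paper is the short structural remark that \eqref{vrvi} and \eqref{shs1} describe the same system and must therefore exhibit the same asymptotics, with the detailed calculation offered for readers not convinced by that argument.
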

\begin{proof}
The simple argument is that \eqref{vrvi} and \eqref{shs1}  describe the same system with different variables. 
Thus, in both representations we must have in particular  the same asymptotic behavior.
Hence, the estimates \eqref{alcu5} must hold. For those who are not convinced by this argument the necessary calculations 
for obtaining \eqref{alcu5} can be found in Appendix \ref{appB}.
\end{proof}

\section{The nonlinear decay estimates}
\label{sec7}

With the preparations from Section \ref{secneu3}
we proceed as in \cite{MSU01} and 
consider the variation of constants formula 
\begin{equation}\label{shs136}
\begin{array}{lcl}
\widehat{w}_c(T) & = & e^{T \widehat{L}_c}  \widehat{w}_c(0)  + \int_0^T e^{(T-\tau) \widehat{L}_c} (M_2(\widehat{w}_c) 
+ \widetilde{B}_{2}(\widehat{w}_c) \\ && \qquad
+ \widetilde{B}_{3}(\widehat{w}_c) 
+ \widetilde{B}_{4}(\widehat{w}_c) 
+ \widetilde{B}_{5}(\widehat{w}_c) +\widetilde{g}_c(\widehat{w}_c,\widehat{w}_s))(\tau) \mathrm{d} \tau,\\
 \widehat{w}_s(T)  & = & e^{T \widehat{L}_s } \widehat{w}_s(0) + \int_0^T e^{ (T-\tau)\widehat{L}_s }
 \widetilde{g}_s(\widehat{w}_c,\widehat{w}_s)(\tau) \mathrm{d} \tau.
\end{array}
\end{equation}
 for \eqref{shs1}. 
 In the following we use the abbreviations
\begin{eqnarray*}
a_{c,\nu}(T) & = & \sup_{0\leq \tau\leq T} \|(1{+}\tau)^{{\nu}/{4}} |k|^{\nu} \widehat{w}_c(\tau)\|_{L^{\infty}},\\
b_{c,\nu}(T) & = & \sup_{0\leq \tau\leq T} \|(1{+}\tau)^{(\nu+1)/4} |k|^{\nu}  \widehat{w}_c(\tau)\|_{L^{1}}, \\
a_{s}(T) & = & \sup_{0\leq \tau\leq T} \| (1{+}\tau)^{{\nu^*}/{4}} \widehat{w}_s(\tau)\|_{L^{\infty}},\\
b_{s}(T) & = & \sup_{0\leq \tau\leq T} \|(1{+}\tau)^{(\nu^*+1)/4} \widehat{w}_s(\tau)\|_{L^{1}},
\end{eqnarray*} 
with $ \nu \in \{0,1,2,3,\nu^*\} $ where $ \nu^* $ is a fixed real number with $ \nu^* < 4 $ which can and will be  chosen arbitrarily close to $ 4 $. 
Moreover, many different constants are denoted with the same symbol $ C $, 
if they can be chosen independently of $ a_{c,0}(T), \ldots,b_s(T) $, and $ T $.
The $ a_{c,0}(T), \ldots,b_s(T) $ will be small and so we assume that they are all 
 smaller than one.

It is sufficient to control $ a_{c,0}(T) $, $ b_{c,0}(T) $, $ a_{c,\nu^*}(T) $, $ b_{c,\nu^*}(T) $, $ a_{s}(T)$, and $ b_{s}(T)$.
We have for instance
\begin{eqnarray*}
a_{c,1}(T) & = & \sup_{0\leq \tau\leq T} \|(1{+}\tau)^{1/{4}} |k| \widehat{w}_c(\tau)\|_{L^{\infty}}
\\ & \leq & \sup_{0\leq \tau\leq T} \|(1{+}\tau)^{1/{4}} (|k|^{\nu^*} |\widehat{w}_c(\tau)|)^{{1}/{\nu^*}}
|\widehat{w}_c(\tau)|^{1- {1}/{\nu^*}} \|_{L^{\infty}} \\ 
& \leq & \sup_{0\leq \tau\leq T} (1{+}\tau)^{1/{4}} ((1{+}\tau)^{-\nu^*/{4}} )^{{1}/{\nu^*}}
a_{c,\nu^*}^{{1}/{\nu^*}}(T) a_{c,0}^{1- {1}/{\nu^*}}(T) \\
& \leq &  C a_{c,\nu^*}^{{1}/{\nu^*}}(T) a_{c,0}^{1- {1}/{\nu^*}}(T)\leq   C (a_{c,\nu^*}(T)+ a_{c,0}(T)).
\end{eqnarray*} 
From \eqref{alcu5} and Young's inequality for convolutions
we find
\begin{eqnarray*}
\|\widetilde{B}_2(\widehat{w}_c) \|_{L^{1}} & \leq & C \| |k|^4 \widehat{w}_c\|_{L^{1}}\| \widehat{w}_c\|_{L^{1}},\\
\|\widetilde{B}_3(\widehat{w}_c) \|_{L^{1}} & \leq & C \| |k|^3 \widehat{w}_c\|_{L^{1}}\| \widehat{w}_c\|_{L^{1}}^2,\\
\|\widetilde{B}_4(\widehat{w}_c) \|_{L^{1}} & \leq & C \| |k|^2 \widehat{w}_c\|_{L^{1}}\| \widehat{w}_c\|_{L^{1}}^3,\\
\|\widetilde{B}_5(\widehat{w}_c) \|_{L^{1}} & \leq & C \| |k| \widehat{w}_c\|_{L^{1}}\| \widehat{w}_c\|_{L^{1}}^4,
\end{eqnarray*} 
and recall 
\begin{eqnarray*}
\|\widetilde{g}_c(\widehat{w}_c,\widehat{w}_s)\|_{L^{1}} & \leq & C (\| \widehat{w}_c\|_{L^{1}}^6 + \| \widehat{w}_c\|_{L^{1}} \| \widehat{w}_s\|_{L^{1}}   + \| \widehat{w}_s\|_{L^{1}}^2) ,\\
\|\widetilde{g}_s(\widehat{w}_c,\widehat{w}_s)\|_{L^{1}} & \leq & C (\| \widehat{w}_c\|_{L^{1}}^5 + \| \widehat{w}_c\|_{L^{1}} \| \widehat{w}_s\|_{L^{1}}+ \| \widehat{w}_s\|_{L^{1}}^2 +\|\widehat{w}_c\|_{L^{1}}\| \partial_T \widehat{w}_c\|_{L^{1}} ),
\end{eqnarray*} 
Due to the convolution structure of all terms occurring in our calculations 
we have, again by  \eqref{alcu5} and Young's inequality for convolutions, that 
\begin{eqnarray*}
\|\widetilde{B}_2(\widehat{w}_c) \|_{L^{\infty}} & \leq & C \| |k|^4 \widehat{w}_c\|_{L^{\infty}}\| \widehat{w}_c\|_{L^{1}}
,\\
\|\widetilde{B}_3(\widehat{w}_c) \|_{L^{\infty}} & \leq & C \| |k|^3 \widehat{w}_c\|_{L^{\infty}}\| \widehat{w}_c\|_{L^{1}}^2 
,\\
\|\widetilde{B}_4(\widehat{w}_c) \|_{L^{\infty}} & \leq & C \| |k|^2 \widehat{w}_c\|_{L^{\infty}}\| \widehat{w}_c\|_{L^{1}}^3
,\\
\|\widetilde{B}_5(\widehat{w}_c) \|_{L^{\infty}} & \leq & C \| |k| \widehat{w}_c\|_{L^{\infty}}\| \widehat{w}_c\|_{L^{1}}^4
,\\
\|\widetilde{g}_c(\widehat{w}_c,\widehat{w}_s)\|_{L^{\infty}} & \leq & C (\| \widehat{w}_c\|_{L^{1}}^5 \| \widehat{w}_c\|_{L^{\infty}}
 + \| \widehat{w}_c\|_{L^{\infty}} \| \widehat{w}_s\|_{L^{1}}   + \| \widehat{w}_s\|_{L^{1}}\| \widehat{w}_s\|_{L^{\infty}}) ,\\
\|\widetilde{g}_s(\widehat{w}_c,\widehat{w}_s)\|_{L^{\infty}} & \leq & C (\| \widehat{w}_c\|_{L^{1}}^4 \| \widehat{w}_c\|_{L^{\infty}}
+ \| \widehat{w}_c\|_{L^{\infty}} \| \widehat{w}_s\|_{L^{1}}  \\ && \qquad + \| \widehat{w}_s\|_{L^{1}}\| \widehat{w}_s\|_{L^{\infty}}
+\|\widehat{w}_c\|_{L^{1}}\| \partial_T \widehat{w}_c\|_{L^{\infty}} 
).
\end{eqnarray*}

\subsection{The diffusive modes}

Since $ \widehat{w}_c $ has compact support in Fourier space 
$ k^4 \widehat{w}_c $ can be estimated in terms of $ |k|^{\nu} \widehat{w}_c $
for every $ \nu \in [0,4) $, in particular for $ \nu = \nu^* $. For $ \nu \in (3,4) $ we have:

\noindent
{\bf a)} We estimate 
\begin{eqnarray*}
&& 
\biggl\|
\int^T_0 e^{(T-\tau) \widehat{L}_c}
\widetilde{B}_2(\widehat{w}_c)(\tau) \mathrm{d} \tau 
\biggr\|_{L^{\infty}}\\
&  \leq &  \int^{T}_0 \|e^{(T-\tau) \widehat{L}_c}
\|_{L^{\infty}\to L^{\infty}}   \| \widetilde{B}_2(\widehat{w}_c)(\tau)  \|_{L^{\infty}} 
  \mathrm{d} \tau\\
& \leq &  C  \int^T_0 
       (1{+}\tau)^{-(\nu+1)/{4}} \mathrm{d} \tau \cdot   (a_{c,\nu}(T) b_{c,0}(T))
       \\& \leq &C a_{c,\nu}(T) b_{c,0}(T).
\end{eqnarray*}
Similarly, we find 
\begin{eqnarray*}
\biggl\|
\int^T_0 e^{(T-\tau) \widehat{L}_c}\widetilde{B}_3(\widehat{w}_c)(\tau) 
 \mathrm{d} \tau 
\biggr\|_{L^{\infty}} & \leq & 
C a_{c,3}(T) b_{c,0}^2(T)
, \\
\biggl\|\int^T_0 e^{(T-\tau) \widehat{L}_c}\widetilde{B}_4(\widehat{w}_c)(\tau) 
 \mathrm{d} \tau 
\biggr\|_{L^{\infty}} & \leq & 
C a_{c,2}(T) b_{c,0}^3(T)
, \\
\biggl\|\int^T_0 e^{(T-\tau) \widehat{L}_c}\widetilde{B}_5(\widehat{w}_c)(\tau) 
 \mathrm{d} \tau 
\biggr\|_{L^{\infty}} & \leq & 
Ca_{c,1}(T) b_{c,0}^4(T)
,
\end{eqnarray*}
and
\begin{eqnarray*}
\biggl\|
\int^T_0 e^{(T-\tau) \widehat{L}_c}\widetilde{g}_c(\widehat{w}_c,\widehat{w}_s)
(\tau) 
 \mathrm{d} \tau 
\biggr\|_{L^{\infty}}
& \leq & C (b_{c,0}^5(T) a_{c,0}(T)
+ a_{c,0}(T) b_{s}(T)
+a_{s}(T) b_{s}(T)).
\end{eqnarray*}

\noindent
{\bf b)}  Next we estimate 
\begin{eqnarray*}
&& (1{+}T)^{1/4}
\biggl\| \int^T_0 e^{(T-\tau) \widehat{L}_c}
\widetilde{B}_2(\widehat{w}_c)(\tau) \mathrm{d} \tau   \biggr\|_{L^{1}}\\
&  \leq &  (1{+}T)^{1/4} \int^{T}_0 \|e^{(T-\tau) \widehat{L}_c}
\|_{L^{\infty}\to L^{1}}   \| \widetilde{B}_2(\widehat{w}_c)(\tau)  \|_{L^{\infty}} 
  \mathrm{d} \tau\\
& \leq &   C (1{+}T)^{1/4} \int_0^{T} (T-\tau)^{-1/4} (1{+}\tau)^{-(\nu + 1)/{4}}
        \mathrm{d} \tau \cdot    (a_{c,\nu}(T) b_{c,0}(T))
     \\
&  \leq  &C  (1{+}T)^{1/4} \int^{T/2}_0(T/2)^{-1/4}  (1{+}\tau)^{-(\nu+1)/{4}} \mathrm{d} \tau  \cdot    (a_{c,\nu}(T) b_{c,0}(T))
\\ &&
+C (1{+}T)^{1/4} \int_{T/2}^{T}(T-\tau)^{-1/4}  (1{+}T/2)^{-(\nu+1)/{4}} \mathrm{d} \tau \cdot    (a_{c,\nu}(T) b_{c,0}(T))
\\&\leq &C    a_{c,\nu}(T) b_{c,0}(T)
\end{eqnarray*} 
It is easily verified that the same technique of splitting the integral $\int_0^T =\int_0^{T/2}+\int_{T/2}^T$ can be used to show that
\begin{equation}\label{splitting_trick}
\begin{split}
 &\text{for all }\alpha,\gamma \geq 0,\beta\in (0,1)\text{ with }\alpha-\beta-\gamma\leq -1\text{ there exists }C>0\\
 &\text{such that for all }T>0 \text{ we have }(1+T)^\alpha\int_0^T(T-\tau)^{-\beta}(1+\tau)^{-\gamma }\,\mathrm{d}\tau \leq C.
\end{split}
\end{equation}
Similarly, we find 
\begin{eqnarray*}
 (1{+}T)^{1/4}\biggl\|
\int^T_0 e^{(T-\tau) \widehat{L}_c}\widetilde{B}_3(\widehat{w}_c)(\tau) 
 \mathrm{d} \tau 
\biggr\|_{L^{1}} & \leq & 
Ca_{c,3}(T) b_{c,0}^2(T)
, \\
 (1{+}T)^{1/4}\biggl\|\int^T_0 e^{(T-\tau) \widehat{L}_c}\widetilde{B}_4(\widehat{w}_c)(\tau) 
 \mathrm{d} \tau 
\biggr\|_{L^{1}} & \leq & 
Ca_{c,2}(T) b_{c,0}^3(T)
, \\
 (1{+}T)^{1/4}\biggl\|\int^T_0 e^{(T-\tau) \widehat{L}_c}\widetilde{B}_5(\widehat{w}_c)(\tau) 
 \mathrm{d} \tau 
\biggr\|_{L^{1}} & \leq & 
Ca_{c,1}(T) b_{c,0}^4(T)
,
\end{eqnarray*}
and
\begin{eqnarray*}
(1{+}T)^{1/4} \biggl\|
 \int^T_0 e^{(T-\tau) \widehat{L}_c}\widetilde{g}_c(\widehat{w}_c,\widehat{w}_s)
(\tau) 
 \mathrm{d} \tau 
\biggr\|_{L^{1}}
& \leq & C (b_{c,0}^5(T) a_{c,0}(T)
 + a_{c,0}(T) b_{s}(T) \\ && \qquad
+a_{s}(T) b_{s}(T)).
\end{eqnarray*}

\noindent
{\bf c)} We estimate
\begin{eqnarray*}
&& 
(1{+}T)^{\nu/4} \biggl\|
\int^T_0 e^{(T-\tau) \widehat{L}_c}
|k|^{\nu}  \widetilde{B}_2(\widehat{w}_c)(\tau) \mathrm{d} \tau 
\biggr\|_{L^{\infty}}\\
&  \leq & (1{+}T)^{\nu/4}  \int^{T}_0 \|e^{(T-\tau) \widehat{L}_c}|k|^{\nu}
\|_{L^{\infty}\to L^{\infty}}   \| \widetilde{B}_2(\widehat{w}_c)(\tau)   \|_{L^{\infty}} 
  \mathrm{d} \tau\\
& \leq &  (1{+}T)^{\nu/4}  C \int^T_0  (T-\tau)^{-\nu/4}(1{+}\tau)^{-(\nu+1)/{4}}
        \mathrm{d} \tau \cdot (a_{c,\nu}(T) b_{c,0}(T))
              \\&\leq& C a_{c,\nu}(T) b_{c,0}(T)
\end{eqnarray*}
using \eqref{splitting_trick}.
Similarly, we find 
\begin{eqnarray*}
 (1{+}T)^{\nu/4}\biggl\|
\int^T_0 e^{(T-\tau) \widehat{L}_c}|k|^{\nu} \widetilde{B}_3(\widehat{w}_c)(\tau) 
 \mathrm{d} \tau 
\biggr\|_{L^{\infty}} & \leq & 
Ca_{c,3}(T) b_{c,0}^2(T)
, \\
 (1{+}T)^{\nu/4}\biggl\|\int^T_0 e^{(T-\tau) \widehat{L}_c}|k|^{\nu} \widetilde{B}_4(\widehat{w}_c)(\tau) 
 \mathrm{d} \tau 
\biggr\|_{L^{\infty}} & \leq & 
Ca_{c,2}(T) b_{c,0}^3(T)
, \\
 (1{+}T)^{\nu/4}\biggl\|\int^T_0 e^{(T-\tau) \widehat{L}_c}|k|^{\nu} \widetilde{B}_5(\widehat{w}_c)(\tau) 
 \mathrm{d} \tau 
\biggr\|_{L^{\infty}} & \leq & 
Ca_{c,1}(T) b_{c,0}^4(T)
,
\end{eqnarray*}
and
\begin{eqnarray*}
(1{+}T)^{\nu/4} \biggl\|
 \int^T_0 e^{(T-\tau) \widehat{L}_c}|k|^{\nu} \widetilde{g}_c(\widehat{w}_c,\widehat{w}_s)
(\tau) 
 \mathrm{d} \tau 
\biggr\|_{L^{\infty}}
& \leq & C (b_{c,0}^5(T) a_{c,0}(T)
+ a_{c,0}(T) b_{s}(T)\\ &&  \qquad
+a_{s}(T) b_{s}(T)).
\end{eqnarray*}

\noindent
{\bf 
d)} The last estimate  for the diffusive part is
\begin{eqnarray*}
&& 
(1{+}T)^{(\nu+1)/4} 
\biggl\| \int^T_0 e^{(T-\tau) \widehat{L}_c}|k|^{\nu}
\widetilde{B}_2(\widehat{w}_c)(\tau) \mathrm{d} \tau   \biggr\|_{L^{1}}\\
&  \leq &  (1{+}T)^{(\nu+1)/4} \int^{T-1}_0 \|e^{(T-\tau) \widehat{L}_c}|k|^{\nu}
\|_{L^{\infty}\to L^{1}}   \| \widetilde{B}_2(\widehat{w}_c)(\tau)  \|_{L^{\infty}} 
  \mathrm{d} \tau\\
  && +  (1{+}T)^{(\nu+1)/4} \int_{T-1}^T \|e^{(T-\tau) \widehat{L}_c}|k|^{\nu}
\|_{L^{1}\to L^{1}}   \| \widetilde{B}_2(\widehat{w}_c)(\tau)  \|_{L^{1}} 
  \mathrm{d} \tau\\
& \leq &  (1{+}T)^{(\nu+1)/4} C\int^{T-1}_0(T{-}\tau)^{-(\nu+1)/4}  (1{+}\tau)^{-(\nu+1)/{4}}
        \mathrm{d} \tau \cdot (a_{c,\nu}(T) b_{c,0}(T))
        \\
       &&+(1{+}T)^{(\nu+1)/4} C\int^T_{T-1}(T{-}\tau)^{-\nu/4}  (1{+}\tau)^{-(\nu+2)/4}
     \mathrm{d} \tau \cdot (b_{c,\nu}(T) b_{c,0}(T) ) 
       \\ & \leq & s_1 + C b_{c,\nu}(T) b_{c,0}(T) .
\end{eqnarray*}
We split  $ \int^{T-1}_0 \ldots= \int^{T/2}_0 \ldots+\int^{T-1}_{T/2} \ldots $, resp. $ s_1 
= s_2 + s_3$, and find 
\begin{eqnarray*}
s_2 & \leq  &  (1{+}T)^{(\nu+1)/4} \int^{T/2}_0(T/2)^{-(\nu+1)/4}  (1{+}\tau)^{-(\nu+1)/{4}} \mathrm{d} \tau  \cdot  (a_{c,\nu}(T) b_{c,0}(T)).
\end{eqnarray*} 
Moreover, 
\begin{eqnarray*}
s_3 & \leq  &  (1{+}T)^{(\nu+1)/4} \int_{T/2}^{T-1}(T-\tau)^{-(\nu+1)/4}  (1{+}T/2)^{-(\nu+1)/{4}}  \mathrm{d} \tau  \cdot (a_{c,\nu}(T) b_{c,0}(T))
\end{eqnarray*} 
such that finally 
$$ s_1 \leq C a_{c,\nu}(T) b_{c,0}(T)
.$$

Similarly, we find 
\begin{eqnarray*}
 (1{+}T)^{(\nu+1)/4}\biggl\|
\int^T_0 e^{(T-\tau) \widehat{L}_c}|k|^{\nu} \widetilde{B}_3(\widehat{w}_c)(\tau) 
 \mathrm{d} \tau 
\biggr\|_{L^{1}} & \leq & 
C(a_{c,3}(T) b_{c,0}^2(T)
+ b_{c,3}(T) b_{c,0}^2(T)), \\
 (1{+}T)^{(\nu+1)/4}\biggl\|\int^T_0 e^{(T-\tau) \widehat{L}_c}|k|^{\nu} \widetilde{B}_4(\widehat{w}_c)(\tau) 
 \mathrm{d} \tau 
\biggr\|_{L^{1}} & \leq & 
C(a_{c,2}(T) b_{c,0}^3(T)
+ b_{c,2}(T) b_{c,0}^3(T)), \\
 (1{+}T)^{(\nu+1)/4}\biggl\|\int^T_0 e^{(T-\tau) \widehat{L}_c}|k|^{\nu} \widetilde{B}_5(\widehat{w}_c)(\tau) 
 \mathrm{d} \tau 
\biggr\|_{L^{1}} & \leq & 
C(a_{c,1}(T) b_{c,0}^4(T)
+ b_{c,1}(T) b_{c,0}^4(T)),
\end{eqnarray*}
and
\begin{eqnarray*}
(1{+}T)^{(\nu+1)/4} \biggl\|
 \int^T_0 e^{(T-\tau) \widehat{L}_c}|k|^{\nu} \widetilde{g}_c(\widehat{w}_c,\widehat{w}_s)
(\tau) 
 \mathrm{d} \tau 
\biggr\|_{L^{1}}
& \leq & C (b_{c,0}^5(T) a_{c,0}(T)
\\ && + a_{c,0}(T) b_{s}(T)
+a_{s}(T) b_{s}(T) \\ && + b_{c,0}^6(T) + b_{c,0}(T)b_{s}(T)  +b_{s}^2(T)) .
\end{eqnarray*}

 \subsection{Handling of the marginal terms}
Now we come to the handling of the marginally stable term $ {M}_2(\widehat{w}_c) $ defined in \eqref{marginalterm}. 

\noindent
{\bf a)} 
We find
\begin{eqnarray*}
&& 
\biggl\|
\int^T_0 e^{(T-\tau) \widehat{L}_c}
{M}_2(\widehat{w}_c)(\tau) \mathrm{d} \tau 
\biggr\|_{L^{\infty}}\\
&  \leq & C \int^{T}_0 \|e^{(T-\tau) \widehat{L}_c} |k|
\|_{L^{\infty}\to L^{\infty}}   \| k \widehat{w}_c(\tau)  \|_{L^{\infty}} \| k  \widehat{w}_c(\tau)  \|_{L^1}
  \mathrm{d} \tau\\
& \leq &  C  \int^T_0  (T-\tau)^{-1/4}
       (1{+}\tau)^{-3/4} \mathrm{d} \tau \cdot   a_{c,1}(T) b_{c,1}(T)
       \\& \leq &C  a_{c,1}(T) b_{c,1}(T),
\end{eqnarray*}
where we used \eqref{splitting_trick}.

\noindent
{\bf b)} Next we have 
\begin{eqnarray*}
&& 
\biggl\|
(1+T)^{1/4}\int^T_0 e^{(T-\tau) \widehat{L}_c}
{M}_2(\widehat{w}_c)(\tau) \mathrm{d} \tau 
\biggr\|_{L^{1}}\\
&  \leq & C (1+T)^{1/4} \int^{T}_0 \|e^{(T-\tau) \widehat{L}_c} |k|
\|_{L^{\infty}\to L^{1}}   \| k \widehat{w}_c(\tau)  \|_{L^{\infty}} \| k  \widehat{w}_c(\tau)  \|_{L^1}
  \mathrm{d} \tau\\
& \leq &  C (1+T)^{1/4}  \int^T_0  (T-\tau)^{-1/2}
       (1{+}\tau)^{-3/4} \mathrm{d} \tau \cdot   a_{c,1}(T) b_{c,1}(T)
       \\& \leq &C  a_{c,1}(T) b_{c,1}(T),
\end{eqnarray*}
again using \eqref{splitting_trick}.

\noindent
{\bf c)} Moreover, with a $ \theta \in (0, 4 -\nu^*) $ we estimate
\begin{eqnarray*}
&& 
\biggl\|
(1+T)^{\nu^*/4}\int^T_0 e^{(T-\tau) \widehat{L}_c} |k|^{\nu^*}
{M}_2(\widehat{w}_c)(\tau) \mathrm{d} \tau 
\biggr\|_{L^{\infty}}\\
&  \leq & (1+T)^{\nu^*/4} \int^{T}_0 \|e^{(T-\tau) \widehat{L}_c} |k|^{\nu^*+ \theta}
\|_{L^{\infty}\to L^{\infty}}   \| |k|^{2-\theta} \widehat{w}_c(\tau)  \|_{L^{\infty}} \| k  \widehat{w}_c(\tau)  \|_{L^1}
  \mathrm{d} \tau\\
& \leq &  C (1+T)^{\nu^*/4}  \int^T_0  (T-\tau)^{-(\nu^*+ \theta)/4}
       (1{+}\tau)^{-(1-\theta/4)} \mathrm{d} \tau \cdot   a_{c,2-\theta}(T) b_{c,1}(T)
       \\& \leq &C  a_{c,2-\theta}(T) b_{c,1}(T),
\end{eqnarray*}
again using \eqref{splitting_trick}.

\noindent
{\bf d)} For the marginally stable term finally again  with a $ \theta \in (0, 4 -\nu^*) $ we estimate
\begin{eqnarray*}
&& 
\biggl\|
(1+T)^{\nu^*/4}\int^T_0 e^{(T-\tau) \widehat{L}_c} |k|^{\nu^*}
{M}_2(\widehat{w}_c)(\tau) \mathrm{d} \tau 
\biggr\|_{L^{1}}\\
&  \leq & (1+T)^{(\nu^*+1)/4} \int^{T/2}_0 \|e^{(T-\tau) \widehat{L}_c} |k|^{\nu^*+1+ \theta}
\|_{L^{\infty}\to L^{1}}   \| |k|^{1- \theta} \widehat{w}_c(\tau)  \|_{L^{\infty}} \| k  \widehat{w}_c(\tau)  \|_{L^1}
  \mathrm{d} \tau
 \\ 
&&  + (1+T)^{(\nu^*+1)/4} \int^{T}_{T/2} \|e^{(T-\tau) \widehat{L}_c} |k|^{\nu^*}
\|_{L^{1}\to L^{1}}   \| |k|^{2} \widehat{w}_c(\tau)  \|_{L^{1}} \| k  \widehat{w}_c(\tau)  \|_{L^1}
  \mathrm{d} \tau
 \\
& \leq &  C (1+T)^{(\nu^*+1)/4}  \int^{T/2}_0  (T/2)^{-(\nu^*+1+\theta)/4}
       (1{+}\tau)^{-(1-\theta)/4} \mathrm{d} \tau \cdot   a_{c,2-\theta}(T) b_{c,1}(T)
       \\ 
     && +   C (1+T)^{(\nu^*+1)/4}  \int^T_{T/2}  (T-\tau)^{-\nu^*/4}
       (1{+}T/2)^{-5/4} \mathrm{d} \tau \cdot   b_{c,2}(T) b_{c,1}(T)       
       \\& \leq &C  (a_{c,2-\theta}(T) b_{c,1}(T)+ b_{c,2}(T) b_{c,1}(T) ).
\end{eqnarray*}

\subsection{The linearly exponentially damped modes}

In the estimates of $
\|\widetilde{g}_s(\widehat{w}_c,\widehat{w}_s)\|_{L^{1}} $ 
and 
$ \|\widetilde{g}_s(\widehat{w}_c,\widehat{w}_s)\|_{L^{\infty}} $  
the new terms  
$$ 
\|\widehat{w}_c\|_{L^{1}}\| \partial_T \widehat{w}_c\|_{L^{1}} \qquad 
\textrm{and} \qquad
\|\widehat{w}_c\|_{L^{1}}\| \partial_T \widehat{w}_c\|_{L^{\infty}} 
$$
occur. They will be estimated as 
\begin{eqnarray*}
\| \partial_T \widehat{w}_c\|_{L^{1}} & \leq & 
\|\widehat{L}_c \widehat{w}_c \|_{L^{1}} +
\|M_2(\widehat{w}_c) \|_{L^{1}} +
\|\widetilde{B}_2(\widehat{w}_c) \|_{L^{1}} +
\|\widetilde{B}_3(\widehat{w}_c) \|_{L^{1}} \\ && +\|\widetilde{B}_4(\widehat{w}_c) \|_{L^{1}} 
+ \|\widetilde{B}_5(\widehat{w}_c) \|_{L^{1}} + \|\widetilde{g}_c(\widehat{w}_c,\widehat{w}_s)\|_{L^{1}} 
\end{eqnarray*} 
and 
\begin{eqnarray*}
\| \partial_T \widehat{w}_c\|_{L^{\infty}} & \leq &
\|\widehat{L}_c \widehat{w}_c \|_{L^{\infty}} +
\|M_2(\widehat{w}_c) \|_{L^{\infty}} +
\|\widetilde{B}_2(\widehat{w}_c) \|_{L^{\infty}} +
\|\widetilde{B}_3(\widehat{w}_c) \|_{L^{\infty}} \\ && +\|\widetilde{B}_4(\widehat{w}_c) \|_{L^{\infty}} 
+ \|\widetilde{B}_5(\widehat{w}_c) \|_{L^{\infty}} + \|\widetilde{g}_c(\widehat{w}_c,\widehat{w}_s)\|_{L^{\infty}} .
\end{eqnarray*} 
For the right hand side we use the estimates from above and 
\begin{eqnarray*}
\|\widehat{L}_c \widehat{w}_c \|_{L^{1}} & \leq &  C \| |k|^4 \widehat{w}_c\|_{L^{1}} ,\\
\|M_2(\widehat{w}_c) \|_{L^{1}} & \leq & C \| |k|^2 \widehat{w}_c\|_{L^{1}}\| |k|\widehat{w}_c\|_{L^{1}},
\end{eqnarray*} 
and the  similar estimates for $ \|\widehat{L}_c \widehat{w}_c \|_{L^{\infty}} $ and 
$ \|M_2(\widehat{w}_c) \|_{L^{\infty}} $. 
In the subsequent estimates these terms will be collected in $ H_2(T) $ and $ H_4(T) $.

\noindent
{\bf 
a)} Therefore, for the linearly exponentially damped part we  first find
\begin{eqnarray*}
&& 
(1{+}T)^{\nu^*/4} \biggl\|
\int^T_0 e^{(T-\tau)\widehat{L}_s }
\widetilde{g}_s(\widehat{w}_c,\widehat{w}_s)(\tau) \mathrm{d} \tau
\biggr\|_{L^{\infty}}\\
&  \leq & (1{+}T)^{\nu^*/4}  \int^{T}_0 \|e^{(T-\tau)\widehat{L}_s  }
\|_{L^{\infty}\to L^{\infty}}   \|  \widetilde{g}_s(\widehat{w}_c,\widehat{w}_s)(\tau) \|_{L^{\infty}} 
  \mathrm{d} \tau\\
& \leq &  (1{+}T)^{\nu^*/4}  \int^T_0 e^{-\sigma_s(T-\tau)} (1{+}\tau)^{-1} \mathrm{d} \tau \cdot (H_1(T)+H_2(T))\\
\\
&  \leq & 
 C (H_1(T)+H_2(T))
\end{eqnarray*}
due to the uniform boundedness of 
\begin{eqnarray*}
(1{+}T)^{\nu^*/4}  \int^T_0 e^{-\sigma_s(T-\tau)} (1{+}\tau)^{-1} \mathrm{d} \tau &\leq& 
(1{+}T)^{\nu^*/4}  \int^{T/2}_0 e^{-\sigma_sT/2} (1{+}\tau)^{-1} \mathrm{d} \tau \\&&+ (1{+}T)^{\nu^*/4}  
\int_{T/2}^T e^{-\sigma_s(T-\tau)} (1{+}T/2)^{-1} \mathrm{d} \tau 
\end{eqnarray*}
and 
where
\begin{eqnarray*}
H_1(T) & \leq &C( b_{c,0}^4(T) a_{c,0}(T)
+ a_{c,0}(T) b_{s}(T)
+a_{s}(T) b_{s}(T)) ,\\ 
H_2(T) & \leq &  C ( a_{c,\nu^*}(T) b_{c,0}(T) + a_{c,2}(T) b_{c,1}(T) b_{c,0}(T)
+ a_{c,3}(T) b_{c,0}(T)^3 \\ &&+ a_{c,2}(T) b_{c,0}(T)^4 + a_{c,1}(T) b_{c,0}(T)^5 + a_{c,0}(T) b_{c,0}(T)^6 \\ &&+ a_{c,0}(T)  b_s(T) b_{c,0}(T)+ a_s(T) b_s(T)b_{c,0}(T)).
\end{eqnarray*}

\noindent
{\bf b)} Secondly, we estimate
\begin{eqnarray*}
&& 
(1{+}T)^{(\nu^*+1)/4} \biggl\|
\int^T_0 e^{(T-\tau)\widehat{L}_s  }
\widetilde{g}_s(\widehat{w}_c,\widehat{w}_s)(\tau)\mathrm{d} \tau
\biggr\|_{L^{1}}\\
&  \leq & (1{+}T)^{(\nu^*+1)/4}  \int^{T}_0 \|e^{ (T-\tau) \widehat{L}_s }
\|_{L^{1}\to L^{1}}   \| \widetilde{g}_s(\widehat{w}_c,\widehat{w}_s) (\tau) \|_{L^{1}} 
  \mathrm{d} \tau\\
& \leq &  (1{+}T)^{(\nu^*+1)/4}  \int^T_0 e^{-\sigma_s(T-\tau)} (1{+}\tau)^{-5/4} \mathrm{d} \tau 
\\&& \qquad \times 
C(H_3(T)+H_4(T))
 \\
&  \leq & C(H_3(T)+H_4(T))
\end{eqnarray*}
due to the uniform boundedness of 
\begin{eqnarray*}
(1{+}T)^{(\nu^*+1)/4}  \int^T_0 e^{-\sigma_s(T-\tau)} (1{+}\tau)^{-5/4} \mathrm{d} \tau &\leq& 
(1{+}T)^{(\nu^*+1)/4}  \int^{T/2}_0 e^{-\sigma_s T/2}(1{+}\tau)^{-5/4} \mathrm{d} \tau \\&&+ (1{+}T)^{(\nu^*+1)/4}  \int_{T/2}^T e^{-\sigma_s(T-\tau)} (1{+}T/2)^{-5/4} \mathrm{d} \tau 
\end{eqnarray*}
and 
where 
\begin{eqnarray*}
H_3(T) & \leq &  C( b_{c,0}^5(T) + b_{c,0}(T)b_{s}(T)  +b_{s}^2(T)), \\
H_4(T) & \leq &  C
( b_{c,\nu^*}(T) b_{c,0}(T) + b_{c,2}(T) b_{c,1}(T) b_{c,0}(T)
+ b_{c,3}(T) b_{c,0}(T)^3 \\ &&+ b_{c,2}(T) b_{c,0}(T)^4 + b_{c,1}(T) b_{c,0}(T)^5 
+  b_{c,0}(T)^7 \\ &&+ b_s(T) b_{c,0}(T)^2+  b_s(T)^2b_{c,0}(T)).
\end{eqnarray*}

\subsection{The final estimates}

We set
$$ 
R(T) = a_{c,0}(T)+b_{c,0}(T)+a_{c,\nu^*}(T)+b_{c,\nu^*}(T)
+  a_{s}(T)+b_{s}(T).
$$
Summing up all estimates yields an inequality 
$$ 
R(T)  \leq R(0) + f(R(T))
$$ 
where $ f $ is at least quadratic in its argument.
Comparing the curves $ R \mapsto R $ and $ R \mapsto \delta+f(R) $, it is  easy to see that 
$ R $ cannot go beyond $ 2 \delta $. Hence,
if $ R(0) < \delta$, with $\delta>0$ sufficiently small, 
especially so small that the implicit function theorem for 
\eqref{mice56} can be applied,
we have the
existence of a $C> 0$ such that $R(T) \leq C$ for all $T \geq 0$.  
Therefore, with this and \eqref{dance2} we are done with the proof of Theorem \ref{yuri1}.
\qed

\paragraph{Acknowledgments.} 
This research was partially supported by the Swiss National Science Foundation grant 171500.

\appendix

\section{The limit profile} \label{appA}

By rescaling $ \varphi $, $ T $, and $ X $ the limit equation  can be brought into 
the form
\[
\partial_T \varphi = - \partial^4_X \varphi - \partial_X \left(
  (\partial_X \varphi)^2\right).
\]
For finding the self-similar solutions we make the ansatz
\[
\varphi (X,T) = \frac{1}{T^{1/4}} \psi \left( \frac{X}{T^{1/4}}
\right) = \frac{1}{T^{1/4}} \psi (\xi).
\]
Using 
$$ 
\partial^n_X \varphi = \ \frac{1}{T^{(n+1)/4}} \psi^{(n)} (\xi )   \qquad \text{and} \qquad 
\partial_T \varphi = - \frac{1}{T^{5/4}} \left( \frac14 \psi +
  \frac14 \xi \psi'\right)
$$
we obtain that $\psi$ satisfies the ODE
\begin{equation} \label{ODE}
0 = - \psi^{(4)} + \frac14 \psi + \frac14 \xi \psi' - \left( (\psi')^2
\right) '.
\end{equation}
We look for solutions homoclinic to the origin, \ie, for solutions which satisfy 
$ \psi(\xi) \to 0 $ for $ |\xi| \to \infty $. In order to do so we first analyze  the linear operator
\[
L \psi = - \psi^{(4)} + \frac14 \xi \psi' + \frac14 \psi,
\]
and then consider 
the nonlinear terms using the implicit function theorem.

For the computation of the spectrum of $L$ we use its
representation in Fourier space, namely 
\[
\frac{1}{2\pi} \int_\mathbb{R} \left( - \psi^{(4)} + \frac14 \xi \psi'
  + \frac14 \psi \right) e^{-ik\xi} \, \mathrm{d} \xi = - k^4 \widehat{\psi} -
  \frac14 k \widehat{\psi}'.
\] 
The eigenvalue problem 
$$
-k^4 \widehat{\psi} - \frac14 k \widehat{\psi}' =
\lambda \widehat{\psi}
$$  
is solved by $\widehat{\psi}_s = k^s e^{-k^4}$
with associated eigenvalue $\lambda_s = - \frac{1}{4} {s}$. It is well known \cite{Wa97} that 
the spectrum depends on the chosen phase space. 
We define
\[
H^n_m = \{ \widehat{u} \in H^n : \left\| \widehat{u} \rho^m\right\|_{H^n} <
\infty\}, 
\qquad 
\text{where}
\quad
\rho (k) = \sqrt{1 + k^2}.
\]
We have $\widehat{\psi} \in H^n_m$ for $\widehat{\psi} = k^s e^{-k^4}$ if $s\in
\mathbb{N}$ or $s > n - \frac12$ and all $m \geq 0$. Hence in $H^n_m$
we have $n$  discrete eigenvalues $\lambda_s = - \frac{1}{4} {s}$ for
$s\in \{ 0,1,\ldots, n-1\}$ and essential spectrum left of $\mathop{\mathrm{Re}}
\lambda = - \frac{1}{4} {n} + \frac18$ due to Sobolev's embedding theorem.

In order to define a projection which separates the eigenspace associated 
to the zero eigenvalue from the rest we consider the 
associated  adjoint operator $ L^\ast $ defined through 
\begin{eqnarray*}
(L\psi, \tilde{\psi})_{L^2} & = & 
{\int_\mathbb{R} \left( - \psi^{(4)} + \frac14 \xi \psi' +
    \frac14 \psi \right) \tilde{\psi} \,\mathrm{d} \xi} \\
&& = \int_\mathbb{R} - \psi'' \tilde{\psi}'' - \frac14 \psi (\xi
\tilde{\psi})' + \frac14 \psi \tilde{\psi} \, \mathrm{d} \xi \\
&& = \int_\mathbb{R} - \psi \tilde{\psi}^{(4)} - \frac14 \psi
\tilde{\psi}' \, \mathrm{d} \xi= (\psi, L^\ast
\tilde{\psi})_{L^2}
\end{eqnarray*}
and so
\[
L^\ast \tilde{\psi} = - \tilde{\psi}^{(4)} - \frac14 \tilde{\psi}'.
\]
It is easy to see that $L^\ast \tilde\psi = 0$ implies $\tilde\psi = const.$.
Therefore, the projection $ P_0 $ on the eigenspace
$\mathop{\mathrm{span}} \{\psi_0\}$ associated to the eigenvalue
$\lambda = 0$ can be defined via the associated adjoint
eigenfunction $\psi^\ast_0 = 1$, \ie,
\[
P_0 u = \langle \psi^\ast_0, u \rangle \psi_0 = \left( \int_\mathbb{R}
  u(\xi) \, \mathrm{d} \xi\right) \cdot \psi_0.
\]
Moreover, let $P_- = I - P_0$. We have $LP_0 = P_0 L$ and $LP_- = P_-
L$.
With these projections we split \eqref{ODE} into two parts. 
We consider $ \psi \in H^2_n $ with $ n \geq 2 $ and set $\psi = A \psi_0 + \psi_-$, with $A \in
\mathbb{R}$ and $P_0 \psi_- = 0$, and obtain 
\begin{eqnarray*}
L(A\psi_0 ) + P_0 \left( - \left( ( \psi')^2\right)'\right) &= &0 ,\\
L\psi_- + P_-\left( - \left( ( \psi')^2\right)'\right) &= &0 .
\end{eqnarray*}
The first equation is satisfied identically, since $L\psi_0 = 0$ and
$$
P_0 \left( - \left( ( \psi')^2\right)'\right) = \left(
  \int_\mathbb{R} - \left( (\psi')^2\right)' \, \mathrm{d}\xi\right)
\psi_0 = 0.
$$
Therefore, we find
\[
\psi_- = - L^{-1} P_- \left( \left( ( A \psi_0 + \psi_-
    )^2\right)'\right). 
\]
For $\left| A\right|$ sufficiently small, the r.h.s.\ is a contraction in $ H^2_n$,
and so we have a unique solution $\psi^*_- (A)\in H^2_n$, resp., $\psi^* (A) = A
\psi_0 + \psi^*_- (A) \in H^2_n$.

\section{Formal irrelevance in the diagonalized system}

\label{appB}

The goal of this section is to provide all calculations necessary for the proof of Lemma \ref{lem33}.
We recall the rules 
$$
V_c \sim T^{-1/4},  \quad \partial_X
\sim T^{-1/4}, \quad \text{and} \quad \partial_T \sim T^{-1}
$$ 
and start now expanding our equations in powers of $ T^{-1/4} $.
In order to keep the notation on a reasonable level 
we abbreviate all terms with $ \mathcal{O}(T^{-\alpha}) $
which turn out to be obviously irrelevant  w.r.t. the linear dynamics.  Herein,
$ \alpha > 0 $ will vary from formula to formula.
For instance a term of power $T^{-3/4} $ must contain one $ V_c $ and two $x$-derivatives,
or $ V_c^2 $ and one $x$-derivative, or $ V_c^3 $.
We could have called this expansion parameter $ \varepsilon $, but we  thought, it is more natural to keep $ T^{-1/4} $ as small expansion parameter.

We recall  the eigenvalues and eigenvectors 
of the operator $ L $ in Fourier space, \ie, of the matrix
$$\left(
\begin{array}{cc}
-k^2- \frac{4}{3} & - 2 \sqrt{\frac13} ik\\ 2 \sqrt{\frac13} ik & -k^2 
\end{array}\right).
$$
The eigenvalues are zeroes of  the characteristic polynomial, \ie, 
$$ 
(k^2+\lambda)^2 +  \frac{4}{3}(k^2+\lambda) - \frac{4}{3} k^2= 0.
$$
The eigenvalues are then given
$$
\lambda_{1/2}(k) = -\frac23\pm \frac23\sqrt{1+3k^2} = \left\{ \begin{array}{cl}
-\frac34 k^4 +\mathcal{O}(k^6), & \text{for } k=1, \\ 
- \frac{4}{3} - 2 k^2 +\frac34 k^4 +\mathcal{O}(k^6), & \text{for } k=2 .
\end{array} \right.
$$
For the change of variables  leading to the diagonalization we need to 
compute the associated eigenvectors $ \widehat{\varphi}_1 $ and $ \widehat{\varphi}_2 $.
For our purposes it is sufficient to compute an expansion of the eigenfunctions 
at $ k = 0 $. In order to keep the following calculations on a reasonable level,
we use a slightly different normalization. We set the second component of $ \widehat{\varphi}_1 $ 
and the first  component of $ \widehat{\varphi}_2 $ to one.

{\bf i)} We start with $ \lambda_1 $.
We have to find the kernel of the matrix 
 $$\left(
\begin{array}{cc}
-k^2- \frac{4}{3}  + \frac34 k^4 +\mathcal{O}(k^6)& - 2 \sqrt{\frac13} ik\\ 2 \sqrt{\frac13} ik & -k^2 
+ \frac34 k^4 +\mathcal{O}(k^6)
\end{array}\right) = A_0 + k A_1 + k^2 A_2 + k^4 A_4 +\mathcal{O}(k^6),
$$
with 
$$
A_0 =
\left(
\begin{array}{cc}
 - \frac{4}{3}  &0 \\ 0&0 \end{array}\right) ,
\quad
A_1 = \left(
\begin{array}{cc}
0& - 2 \sqrt{\frac13} i\\ 2 \sqrt{\frac13} i &0\end{array}\right) ,
$$
and 
$$
A_2 =
\left(
\begin{array}{cc}
 - 1  &0 \\ 0&-1 \end{array}\right) ,
\quad
A_4 =
\left(
\begin{array}{cc}
  \frac34  &0 \\ 0&\frac34 \end{array}\right).
$$
It turns out that for the associated eigenvector it is sufficient  to make the ansatz
$$ 
 \widehat{\varphi}_1  = \left(
\begin{array}{c}
 a_1k + a_3 k^3 +  \mathcal{O}(k^5)\\ 1 \end{array}\right) .
$$ 
At $ k^0 $ we find $ A_0  \left(
\begin{array}{c}
 0\\ 1 \end{array}\right) =  \left(\begin{array}{c}
 0\\ 0 \end{array}\right) $ which is satisfied.
 
 At $ k^1 $ we find 
 $$ 
 A_0  \left(
\begin{array}{c}
 a_1\\ 0 \end{array}\right)  + A_1 \left(
\begin{array}{c}
 0\\ 1 \end{array}\right) =  \left(\begin{array}{c}
 0\\ 0 \end{array}\right) 
 $$ 
 which leads to $ -\frac43 a_1 = 2 \sqrt{\frac13} i $ or equivalently 
 to $ a_1 = - \frac{\sqrt{3}}{2} i $.
 
 At $ k^2 $ we find 
 $$ 
 A_1  \left(
\begin{array}{c}
 a_1\\ 0 \end{array}\right)  + A_2 \left(
\begin{array}{c}
 0\\ 1 \end{array}\right) =  \left(\begin{array}{c}
 0\\ 0 \end{array}\right) 
 $$ 
which is satisfied.

At $ k^3 $ we find 
 $$ 
 A_0  \left(
\begin{array}{c}
 a_3\\ 0 \end{array}\right)  
 + A_2 \left(
\begin{array}{c}
 a_1\\ 0 \end{array}\right) =  \left(\begin{array}{c}
 0\\ 0 \end{array}\right) 
 $$ 
which leads 
 to $ a_3 =  \frac34 \frac{\sqrt{3}}{2} i $.

At $ k^4 $ we find 
 $$ 
 A_1  \left(
\begin{array}{c}
 a_3\\ 0 \end{array}\right)  + A_4 \left(
\begin{array}{c}
 0\\ 1 \end{array}\right) =  \left(\begin{array}{c}
 0\\ 0 \end{array}\right) 
 $$ 
which is satisfied.
Therefore, we found
$$ 
 \widehat{\varphi}_1  = \left(
\begin{array}{c}
  - \frac{\sqrt{3}}{2} i k + \frac34 \frac{\sqrt{3}}{2} i k^3 +  \mathcal{O}(k^5)\\ 1 \end{array}\right) .
$$ 

{\bf ii)} Next we come to $ \lambda_2 $.
We have to find the kernel of the matrix 
 $$\left(
\begin{array}{cc}
k^2-   \frac34 k^4 +\mathcal{O}(k^6)& - 2 \sqrt{\frac13} ik\\ 2 \sqrt{\frac13} ik &
\frac43
 +k^2 
- \frac34 k^4 +\mathcal{O}(k^6)
\end{array}\right) = B_0 + k B_1 + k^2 B_2 + k^4 B_4 +\mathcal{O}(k^6),
$$
with 
$$
B_0 =
\left(
\begin{array}{cc}
0&0 \\0&\frac{4}{3}   \end{array}\right) ,
\quad
B_1 = \left(
\begin{array}{cc}
0& - 2 \sqrt{\frac13} i\\ 2 \sqrt{\frac13} i &0\end{array}\right) ,
$$
and 
$$
B_2 =
\left(
\begin{array}{cc}
 1  &0 \\ 0&1 \end{array}\right) ,
\quad
B_4 =
\left(
\begin{array}{cc}
 - \frac34  &0 \\ 0&- \frac34 \end{array}\right).
$$
It turns out that for the associated eigenvector it is sufficient  to make the ansatz
$$ 
 \widehat{\varphi}_2  = \left(
\begin{array}{c} 1 \\
 b_1k + b_3 k^3 +  \mathcal{O}(k^5) \end{array}\right) .
$$ 
At $ k^0 $ we find $ B_0  \left(
\begin{array}{c}
 1\\ 0 \end{array}\right) =  \left(\begin{array}{c}
 0\\ 0 \end{array}\right) $ which is satisfied.
 
 At $ k^1 $ we find 
 $$ 
 B_0  \left(
\begin{array}{c}
 0\\ b_1 \end{array}\right)  + B_1 \left(
\begin{array}{c}
 1\\ 0 \end{array}\right) =  \left(\begin{array}{c}
 0\\ 0 \end{array}\right) 
 $$ 
 which leads to $ \frac43 b_1 = - 2 \sqrt{\frac13} i $ or equivalently 
 to $ b_1 = - \frac{\sqrt{3}}{2} i $.
 
 At $ k^2 $ we find 
 $$ 
 B_1  \left(
\begin{array}{c}
0 \\  b_1 \end{array}\right)  + B_2 \left(
\begin{array}{c}
 1\\ 0 \end{array}\right) =  \left(\begin{array}{c}
 0\\ 0 \end{array}\right) 
 $$ 
which is satisfied.

At $ k^3 $ we find 
 $$ 
 B_0  \left(
\begin{array}{c}
0 \\ b_3  \end{array}\right)  
 + B_2 \left(
\begin{array}{c}
 0 \\ b_1 \end{array}\right) =  \left(\begin{array}{c}
 0 \\ 0 \end{array}\right) 
 $$ 
which leads 
 to $ b_3 = - \frac34 \frac{\sqrt{3}}{2} i $.

At $ k^4 $ we find 
 $$ 
 B_1  \left(
\begin{array}{c}
 0 \\ b_3 \end{array}\right)  + B_4 \left(
\begin{array}{c}
 1\\ 0 \end{array}\right) =  \left(\begin{array}{c}
 0\\ 0 \end{array}\right) 
 $$ 
which is satisfied.
Therefore, we found
$$ 
 \widehat{\varphi}_2 = \left(
\begin{array}{c} 1 \\ 
  - \frac{\sqrt{3}}{2} i k +\frac34 \frac{\sqrt{3}}{2} i k^3 +  \mathcal{O}(k^5) \end{array}\right) .
$$ 
We use these eigenfunctions to diagonalize 
$$
\partial_T \widehat{v} =\widehat{L} \widehat{v} + \widehat{N}(\widehat{v}), 
$$
with $ \widehat{v} = \widehat{S} \widehat{\widetilde{v}} $ with matrix $ \widehat{S}(k) = ( \widehat{\varphi}_2(k) \quad   \widehat{\varphi}_1(k)) $ to obtain
$$
\partial_T \widehat{\widetilde{v}} =\widehat{\Lambda} \widehat{\widetilde{v}} + \widehat{S}^{-1} \widehat{N}(\widehat{S} \widehat{\widetilde{v}}), 
$$
with $ \Lambda= {\rm diag}(\lambda_2,\lambda_1) $.
Again our purposes it is sufficient to compute an expansion of $ \widehat{S} $ and $ \widehat{S}^{-1} $
at $ k = 0 $. We find 
$$ 
\widehat{S}(k) = 
 \left(
\begin{array}{cc} 1 &  - \frac{\sqrt{3}}{2} i k +\frac34 \frac{\sqrt{3}}{2} i k^3 +  \mathcal{O}(k^5) 
 \\ 
  - \frac{\sqrt{3}}{2} i k +\frac34 \frac{\sqrt{3}}{2} i k^3 +  \mathcal{O}(k^5) 
  & 1 \end{array}\right).
 $$
 We compute 
 $$
 \det= 1 + \frac34 k^2-  \frac98 k^4+ \mathcal{O}(k^6) ,
 $$
 and so
 \begin{eqnarray*}
\widehat{S}^{-1}(k) & = & \frac{1}{1 + \frac34 k^2-  \frac98 k^4+  \mathcal{O}(k^6) }
 \left(
\begin{array}{cc} 1 &   \frac{\sqrt{3}}{2} i k -\frac34 \frac{\sqrt{3}}{2} i k^3 +  \mathcal{O}(k^5) 
 \\ 
  \frac{\sqrt{3}}{2} i k -\frac34 \frac{\sqrt{3}}{2} i k^3 +  \mathcal{O}(k^5) 
  & 1 \end{array}\right)
  \\
  & = &  \left(
\begin{array}{cc} 1 -  \frac34 k^2 + \frac{27}{16} k^4&   \frac{\sqrt{3}}{2} i k -\frac32 \frac{\sqrt{3}}{2} i k^3  \\ 
  \frac{\sqrt{3}}{2} i k -\frac32 \frac{\sqrt{3}}{2} i k^3 
  & 1-  \frac34 k^2 + \frac{27}{16} k^4 \end{array}\right)+  \mathcal{O}(k^5).
 \end{eqnarray*}
  In order to calculate  the diagonalized system for 
  $ \widetilde{v} = (V_s,V_c) $,
  we start with  the non-diagonalized system for $ v= (V_r,V_i) $, namely 
 $$
 \underbrace{\partial_T V_r}_{\sim T^{-3/2}} = g_r, \qquad \underbrace{\partial_T V_i}_{\sim T^{-5/4}} = g_i,
 $$ 
 where
 \begin{eqnarray*}
g_r & = & \underbrace{\partial_X^2 V_r}_{\sim T^{-1}} -\underbrace{\tfrac{4}{3} V_r}_{\sim T^{-1/2}}  -\underbrace{2\sqrt{\tfrac{1}{3}}\partial_X V_i}_{\sim T^{-1/2}}
- \frac{2}{3}(\underbrace{3 V_r^2}_{\sim T^{-1}} + \underbrace{V_i^2}_{\sim T^{-1/2}}+\underbrace{V_r^3}_{\sim T^{-3/2}}+\underbrace{V_rV_i^2}_{\sim T^{-1}}), \\
g_i  & = &  \underbrace{\partial_X^2 V_i}_{\sim T^{-3/4}}  +  \underbrace{2\sqrt{\tfrac{1}{3}}\partial_X  V_r}_{\sim T^{-3/4}}   -\frac{2}{3}(\underbrace{2V_rV_i}_{\sim T^{-3/4}} +\underbrace{V_r^2V_i}_{\sim T^{-5/4}} +\underbrace{V_i^3}_{\sim T^{-3/4}} ).
\end{eqnarray*}
We compute 
$$
 \underbrace{\partial_T V_s}_{\sim T^{-3/2}} = g_s, \qquad \underbrace{\partial_T V_c}_{\sim T^{-5/4}} = g_c.
 $$ 
In order to avoid working with the convolutions in Fourier space we consider 
$ \widehat{S} $ and $ \widehat{S}^{-1} $ in physical space. We obtain
$$ 
S(\partial_X) = 
 \left(
\begin{array}{cc} 1 &  - \frac{\sqrt{3}}{2} \partial_X -\frac34 \frac{\sqrt{3}}{2} \partial_X^3  \\ 
  - \frac{\sqrt{3}}{2} \partial_X -\frac34 \frac{\sqrt{3}}{2} \partial_X^3 
  & 1 \end{array}\right)+  \mathcal{O}(T^{-5/4}) 
 $$
and 
$$
S^{-1}(\partial_X)    =    \left(
\begin{array}{cc} 1 +  \frac34 \partial_X^2 + \frac{27}{16} \partial_X^4&   \frac{\sqrt{3}}{2} \partial_X +\frac32 \frac{\sqrt{3}}{2} \partial_X^3  \\ 
  \frac{\sqrt{3}}{2} \partial_X +\frac32 \frac{\sqrt{3}}{2} \partial_X^3 
  & 1+  \frac34 \partial_X^2 + \frac{27}{16} \partial_X^4 \end{array}\right)
+ \mathcal{O}(T^{-5/4}) .
$$
In the following lengthy calculations, in $ g_r $ and $ g_s $ we have to keep terms 
of order $ \mathcal{O}(T^{-1/2}) $ and $ \mathcal{O}(T^{-1}) $, and
in $ g_i $ and $ g_c $ we have to keep terms 
of order $ \mathcal{O}(T^{-3/4}) $ and $ \mathcal{O}(T^{-5/4}) $.
With
$$
V_r = \underbrace{V_s}_{\sim T^{-1/2}} - \frac{\sqrt{3}}{2} \underbrace{\partial_X V_c}_{\sim T^{-1/2}} -\frac34 \frac{\sqrt{3}}{2} \underbrace{\partial_X^3 V_c}_{\sim T^{-1}}
$$
and 
$$
V_i = \underbrace{V_c}_{\sim T^{-1/4}}  - \frac{\sqrt{3}}{2}\underbrace{\partial_X V_s}_{\sim T^{-3/4}}-\frac34 \frac{\sqrt{3}}{2} \underbrace{\partial_X^3 V_s}_{\sim T^{-5/4}}.
$$
After another lengthy calculation  we arrive at 
$$ 
g_s = s_2 + s_4 +  \mathcal{O}(T^{-3/2}) \qquad \text{and} \qquad 
g_c = s_3 + s_5 +  \mathcal{O}(T^{-7/4}) ,
$$ 
where
\begin{eqnarray*}
 s_2  & = & - \frac43 V_s - \frac23 V_c^2 ,\\ 
 s_4 & = &  2 \partial_X^2 V_s + \frac16\left(-4 V_c^2 V_s -12 V_s^2 - 4 \sqrt{3} V_c^2 \partial_X V_c
 + 8  \sqrt{3} V_s \partial_X V_c - 9 (\partial_X V_c)^2 \right) ,\\ 
 s_3  & = & -\frac23 (V_c^3+ 2 V_c V_s), \\
 s_5 & = & - \frac34 \partial_X^4 V_c + \frac16(- 4 V_c V_s^2-15 V_c (\partial_X V_c)^2
 +4 \sqrt{3} V_c^2 \partial_X V_s- 8  \sqrt{3} V_s \partial_X V_s \\ && \qquad + 6 (\partial_X V_c)\partial_X V_s
 - 6 V_c^2  \partial_X^2 V_c+ 12 V_s  \partial_X^2 V_c -18 \sqrt{3}  (\partial_X V_c)  \partial_X^2 V_c).
 \end{eqnarray*}
Putting in $ g_s $ the terms of order $ \mathcal{O}(T^{-1/2}) $ to zero, \ie,  $ s_2 = 0 $, 
yields $ V_s = - \frac12 V_c^2 $.
Inserting this in the terms of order   $ \mathcal{O}(T^{-3/4}) $ in $ g_c $ yields 
$ s_3 = 0 $, \ie, these terms vanish identically. 
The next order correction  of $ V_s $ will influence  the terms of 
$ \mathcal{O}(T^{-5/4}) $ in $ g_c $ and so we compute the transform \eqref{coo22}
completely.
We introduce $ \widehat{V}_s^*(\widehat{W}_c) $ by
$ \widehat{v}_s^* (\widehat{w}_c) = \widehat{V}_s^*(\widehat{W}_c) \, \widehat{\varphi}_1$,
so that $ V_s = V_s^*(W_c) + W_s $. We have
\begin{eqnarray*}
 V_s^*(W_c) & = &  - \frac12 W_c^2 + \frac32 \partial_X^2 W_s  \\ && + \frac18\left(-4W_c^2 W_s -12 W_s^2 - 4 \sqrt{3} W_c^2 \partial_X W_c
 + 8  \sqrt{3} W_s \partial_X W_c - 9 (\partial_X W_c)^2 \right) + \mathcal{O}(T^{-3/2})  \\ 
 & = &  - \frac12 W_c^2 - \frac34 \partial_X^2 (W_c^2)  \\ && + \frac18\left(2 W_c^4 - 3 W_c^4 - 4 \sqrt{3} W_c^2 \partial_X W_c
 - 4  \sqrt{3} W_c^2 \partial_X W_c - 9 (\partial_X W_c)^2 \right)+ \mathcal{O}(T^{-3/2}) \\ 
  & = &  - \frac12 W_c^2 - \frac34 \partial_X^2 (W_c^2)  + \frac18\left(- W_c^4 - 8 \sqrt{3} W_c^2 \partial_X W_c
  - 9 (\partial_X W_c)^2 \right)+ \mathcal{O}(T^{-3/2}) .
\end{eqnarray*}
Inserting $ V_s = V_s^*(W_c) + W_s $ into $ s_5 $ gives a big number of cancellations
and so we finally obtain 
$$ s_5 = - \frac34 \partial_X^4 W_c -\frac32 \sqrt{3} \partial_X((\partial_X W_c )^2).$$
Thus, the first equation of \eqref{shs1} is of the form 
$$ 
\partial_T W_c = - \frac34 \partial^4_X W_c - \frac32
\sqrt{3} \partial_X \left(( \partial_X W_c)^2\right) + \check{B}_2(W_c)
+ \check{B}_3(W_c)+ \check{B}_4(W_c)+ \check{B}_5(W_c) + \check{g}_c(W_c,W_s)
$$ 
where
$$
\check{B}_2(W_c)
+ \check{B}_3(W_c)+ \check{B}_4(W_c)+ \check{B}_5(W_c) +  \check{g}_c(W_c,W_s)
$$
decays at least with a rate  $ T^{-3/2} $, with $ \check{B}_m $ standing for the 
$ m $-linear terms
in $ W_c $.
In Fourier space they can be written as 
\begin{eqnarray*}
 \check{B}_{2}(\widehat{W}_c)(k)   & = &    \int {K}_2(k,k-l,l)  \widehat{W}_c(k-l) \widehat{W}_c(l) \mathrm{d}l,
   \\
 \check{B}_{3}(\widehat{W}_c)(k)  & = & \int {K}_3(k,k-l,l-l_1,l_1)  \widehat{W}_c(k-l) \widehat{W}_c(l-l_1) \widehat{W}_c(l_1)\mathrm{d}l_1\mathrm{d}l ,
\end{eqnarray*}
and similarly for $  \check{B}_{4} $ and $  \check{B}_{5} $. 
Since the decay rates in time correspond one-to-one to the powers w.r.t.~$ W_c $ or to 
the decay rates of the kernels at the origin, we necessarily have 
\begin{equation} 
\begin{array}{rcl}
\label{alcu5bg}
|{K}_2(k,k_1,k_2) | & \leq & C (|k|^4+|k_1|^4+|k_2|^4), \\
|{K}_3(k,k_1,k_2,k_3) | & \leq & C (|k|^3+|k_1|^3+|k_2|^3+|k_3|^3), \\
|{K}_4(k,k_1,k_2,k_3,k_4) | & \leq & C (|k|^2+|k_1|^2+|k_2|^2+|k_3|^2+ |k_4|^2), \\
|{K}_5(k,k_1,k_2,k_3,k_4,k_5) | & \leq & C (|k|+|k_1|+|k_2|+|k_3|+ |k_4|+|k_5|),
\end{array}
\end{equation}
for $  k, k_1,k_2,k_3,k_4,k_5 \to 0 $.
With the same argument the statement about  $ M_2 $ and $ K^* $ follows.

\bibliography{biblio}

\begin{thebibliography}{{Way}97}
\expandafter\ifx\csname url\endcsname\relax
  \def\url#1{\texttt{#1}}\fi
\expandafter\ifx\csname doi\endcsname\relax
  \def\doi#1{\burlalt{doi:#1}{http://dx.doi.org/#1}}\fi
\expandafter\ifx\csname urlprefix\endcsname\relax\def\urlprefix{URL }\fi
\expandafter\ifx\csname href\endcsname\relax
  \def\href#1#2{#2}\fi
\expandafter\ifx\csname burlalt\endcsname\relax
  \def\burlalt#1#2{\href{#2}{#1}}\fi

\bibitem[BK92]{BK92}
J.~{Bricmont} and A.~{Kupiainen}.
\newblock {Renormalization group and the Ginzburg-Landau equation.}
\newblock {\em {Commun. Math. Phys.}}, 150(1):193--208, 1992.
\newblock \doi{10.1007/BF02096573}.

\bibitem[BKL94]{BKL94}
J.~{Bricmont}, A.~{Kupiainen}, and G.~{Lin}.
\newblock {Renormalization group and asymptotics of solutions of nonlinear
  parabolic equations.}
\newblock {\em {Commun. Pure Appl. Math.}}, 47(6):893--922, 1994.
\newblock \doi{10.1002/cpa.3160470606}.

\bibitem[CEE92]{CEE92}
P.~Collet, J.-P. Eckmann, and H.~Epstein.
\newblock Diffusive repair for the {G}inzburg-{L}andau equation.
\newblock {\em Helv. Phys. Acta}, 65:56--92, 1992.
\newblock \doi{10.5169/seals-116387}.

\bibitem[DSSS09]{DSSS}
A.~{Doelman}, B.~{Sandstede}, A.~{Scheel}, and G.~{Schneider}.
\newblock {The dynamics of modulated wave trains.}
\newblock {\em {Mem. Am. Math. Soc.}}, 934:105, 2009.
\newblock \doi{10.1090/memo/0934}.

\bibitem[{Eck}65]{Eckhaus65book}
W.~{Eckhaus}.
\newblock {Studies in non-linear stability theory.}
\newblock { Berlin-Heidelberg-New York: Springer-Verlag. VIII, 117 p.}, 1965.
\newblock \doi{10.1007/978-3-642-88317-0}.

\bibitem[{Hen}81]{He81}
D.~{Henry}.
\newblock {Geometric theory of semilinear parabolic equations.}
\newblock {Lecture Notes in Mathematics. 840. Berlin-Heidelberg-New York:
  Springer-Verlag. IV, 348 p. }, 1981.
\newblock \doi{10.1007/BFb0089647}.

\bibitem[JNRZ14]{zumbrum}
M.A. {Johnson}, P.~{Noble}, L.M. {Rodrigues}, and Kevin {Zumbrun}.
\newblock {Behavior of periodic solutions of viscous conservation laws under
  localized and nonlocalized perturbations.}
\newblock {\em {Invent. Math.}}, 197(1):115--213, 2014.
\newblock \doi{10.1007/s00222-013-0481-0}.

\bibitem[MS04]{MS04MN}
I.~{Melbourne} and G.~{Schneider}.
\newblock {Phase dynamics in the real Ginzburg-Landau equation.}
\newblock {\em {Math. Nachr.}}, 263-264:171--180, 2004.
\newblock \doi{10.1002/mana.200310129}.

\bibitem[MSU01]{MSU01}
A.~{Mielke}, G.~{Schneider}, and H.~{Uecker}.
\newblock {Stability and diffusive dynamics on extended domains.}
\newblock In {\em {Ergodic theory, analysis, and efficient simulation of
  dynamical systems}}, pages 563--583. Berlin: Springer, 2001.
\newblock \doi{10.1007/978-3-642-56589-2_24}.

\bibitem[NW69]{NW69}
A.~C. Newell and J.~A. Whitehead.
\newblock Finite bandwidth, finite amplitude convection.
\newblock {\em Journal of Fluid Mechanics}, 38:279--303, 1969.
\newblock \doi{10.1017/S0022112069000176}.

\bibitem[{Sch}98]{Schn98ARMA}
G.~{Schneider}.
\newblock {Nonlinear stability of Taylor vortices in infinite cylinders.}
\newblock {\em {Arch. Ration. Mech. Anal.}}, 144(2):121--200, 1998.
\newblock \doi{10.1007/s002050050115}.

\bibitem[SSSU12]{SSSU}
B.~{Sandstede}, A.~{Scheel}, G.~{Schneider}, and H.~{Uecker}.
\newblock {Diffusive mixing of periodic wave trains in reaction-diffusion
  systems.}
\newblock {\em {J. Differ. Equations}}, 252(5):3541--3574, 2012.
\newblock \doi{10.1016/j.jde.2011.10.014}.

\bibitem[SU17]{SUbook}
G.~{Schneider} and H.~{Uecker}.
\newblock {\em Nonlinear PDEs - A Dynamical Systems Approach}, volume 182 of
  {\em Graduate Studies in Mathematics}.
\newblock AMS, 2017.

\bibitem[SZ13]{SZ13}
G.~{Schneider} and D.~{Zimmermann}.
\newblock {Justification of the Ginzburg-Landau approximation for an
  instability as it appears for Marangoni convection.}
\newblock {\em {Math. Methods Appl. Sci.}}, 36(9):1003--1013, 2013.
\newblock \doi{10.1002/mma.2654}.

\bibitem[{Way}97]{Wa97}
C.E. {Wayne}.
\newblock {Invariant manifolds for parabolic partial differential equations on
  unbounded domains.}
\newblock {\em {Arch. Ration. Mech. Anal.}}, 138(3):279--306, 1997.
\newblock \doi{10.1007/s002050050042}.

\bibitem[{Wol}]{mathematica}
{Wolfram Research, Inc.}
\newblock Mathematica, {V}ersion 11.2.
\newblock Champaign, IL, 2017.

\end{thebibliography}
\bibliographystyle{halpha}

\end{document}